\documentclass[12pt]{article}
\usepackage{amsmath, amssymb, amsthm, enumerate}
\usepackage{mathtools}

\newtheorem{theorem}{Theorem}[section]
\newtheorem{lemma}[theorem]{Lemma}

\newtheorem{remarks}[theorem]{Remarks}
\newtheorem{cor}[theorem]{Corollary}
\newtheorem{defin}[theorem]{Definition}

\def\rr{{\mathbb R}}

\def\zz{{\mathbb Z}}
\def\qq{{\mathbb Q}}

\def\kk{{\cal K}}
\def\sik{{\rr}^2} 
\def\am{^{-1}} 
\def\su{\subset}
\def\sp{\supset}
\def\se{\setminus}
\def\al{\alpha}
\def\be{\beta}
\def\ga{\gamma}
\def\de{\delta}
\def\ep{\varepsilon}
\def\la{\lambda}
\def\si{\sigma}

\def\cd{\cdot}
\def\stb{,\ldots ,}
\def\emp{\emptyset}
\def\sumin{\sum_{i=1}^n}

\def\cl{{\rm cl}\, }
\def\nl{[0,1]}
\def\proof{\noindent {\bf Proof.} }
\def\graph{{\rm graph}\, }
\def\dist{{\rm dist}\, }
\def\pr{{\rm pr}_x \, }
\def\akkor{\Longrightarrow}

\begin{document}
\title{Graph-null sets}
\author{M. Laczkovich and A. M\'ath\'e}

\footnotetext[1]{{\bf Keywords:} Kakeya-type properties using translations}
\footnotetext[2]{{\bf MR subject classification:} 28A75}

\maketitle

\begin{abstract}
We say that a plane set $A$ is {\it graph-null,} if there is a function 
$g\colon \nl \to \rr$ such that $\la _2 (A+\graph g)=0$. A plane set $A$ has
the {\it translational Kakeya property} if, for every translated copy $A'$
of $A$ and for every $\ep >0$, there is a finite sequence of vertical
and horizontal translations bringing $A$ to $A'$ such that the area touched
during the horizontal translations is less than $\ep$.

These properties are equivalent if $A$ is compact. We show that 
the graph of every absolutely continuous function is graph-null. Also, the graph
of a typical continuous function is graph-null. Therefore, there are nowhere
differentiable continuous functions whose graphs are graph-null. Still,
we show that there exists a continuous function whose graph is not graph-null.
\end{abstract}

\section{Introduction and main results}\label{s1}

We say that a set $A\su \sik$ has the
{\it Kakeya property} (shortly property (K)), if there exists a set $A'\ne A$
such that $A$ can be continuously moved to $A'$ within a set of
arbitrarily small area. Clearly, every segment and every circular arc has
property (K). A set $A\su \sik$ is said to have the {\it strong Kakeya
property} (shortly property ${\rm (K^s )}$) if, whenever $A'$ is an arbitrary
congruent copy of $A$, then $A$ can be continuously moved to $A'$ within a set
of arbitrarily small area. It is well known that line segments have 
property ${\rm (K^s )}$; this follows from Besicovitch' solution of
the classical Kakeya problem \cite{B}, \cite[Theorem 7.6]{F}. Some circular
arcs also have  property ${\rm (K^s )}$. For short enough arcs this was proved
in \cite{HK}, and for circular arcs shorter than the half circle in \cite{CC}.
It is not known whether or not every proper circular arc has property
${\rm (K^s )}$. On the other hand, if a continuum has property (K), then it
must be a segment or a circular arc \cite{CHL}.
In this note we investigate a related notion.
\begin{defin} {\rm A plane set $A$ has the} translational Kakeya property
${\rm (K^t)}$ {\rm if, for every translated copy $A'$ of $A$ and for every
$\ep >0$, there is a finite sequence of vertical and horizontal translations
bringing $A$ to $A'$ such that the area touched during the} horizontal {\rm
translations is less than $\ep$.}
\end{defin}

The most interesting cases are the continuous curves. 
We will see that property ${\rm (K^t)}$ is shared by the graphs of most
continuous functions and by most absolutely continuous curves.
As paradoxical as it may seem, every circle and every polygon without
vertical sides has property ${\rm (K^t)}$ (see Remark \ref{r1}.1).

First we give an equivalent formulation of property ${\rm (K^t)}$.
We denote the $d$ dimensional Lebesgue outer measure by $\la _d$.
When a set $A\su \sik$ is translated by the vector $(a,0)$, the set of points
touched is $A+ ([0,a]\times \{0\} )$. From this observation it is clear
that {\it a set $A\su \sik$ has property ${\rm (K^t)}$ if and only if, for
every $\ep >0$  there is a simple function $h\colon \nl \to \rr$ such that 
$\la _2 (A+\graph h)<\ep$.}
(A function $h\colon [a,b]\to \rr$ is {\it simple}, if there is a partition
$a=x_0 <x_1 <\ldots <x_k =b$ such that $h$ is constant in $(x_{i-1} ,x_i )$
for every $i=1\stb k$.)

If the projection of a set $A$ onto the $y$ axis is of measure zero, then
$A$ has property ${\rm (K^t)}$. Indeed, translating $A$
horizontally, the area touched has two dimensional measure zero. 

In the other direction, let $E_A$ denote the set of numbers $y\in \rr$ such
that $((0, \ep )\times \{ y\})\cap A \ne \emp$ for every $\ep >0$. If
$A$ has property ${\rm (K^t)}$, then $E_A$ must be of measure zero. Indeed,
if $h\colon \nl\to \rr$ is a simple function, and $0=x_0 <x_1 <\ldots <x_k =1$
is a partition such that $h(x)=c_i$ if $(x_{i-1} <x<x_i )$ for every $i=1\stb k$,
then $A+\graph h$ contains the set $(x_{i-1} ,x_i ) \times (E_A +c_i )$ for every
$i=1\stb k$, and thus
$$\la _2 (A+\graph h)\ge \sumin \la _1 (E_A )\cd (x_i -x_{i-1} )=\la _1 (E_A ).$$

We also consider the following variant:
\begin{defin}
{\rm A plane set $A$ is} graph-null, {\rm if there is a function 
$g\colon \nl \to \rr$ such that $\la _2 (A+\graph g)=0$.}
\end{defin}
If the projection of the set $A$ onto a nonhorizontal line $e$ is of linear
measure zero, then $A$ is graph-null. Indeed, in this case $\la _2 (A+f)=0$
for every line $f$ perpendicular to $e$. 

For example, the set $B=\{ (x,y)\colon y-x\in \qq \}$ is graph-null. On the
other hand, $B$ does not have property ${\rm (K^t)}$, since $((0,\ep )\times
\{ y\} )\cap B\ne \emp$ for every $\ep >0$ and $y\in \rr$. Therefore, the
graph-null property does not imply ${\rm (K^t)}$. In Remark \ref{r1}.2 we
will see that the graph-null property does not imply ${\rm (K^t)}$ even
for the graphs of functions that are continuous everywhere except one point.
One can also show that the ${\rm (K^t)}$ property does not imply the
graph-null property even for $G_\de$ sets (see \cite{M}). Therefore,
${\rm (K^t)}$ and the graph-null property are independent.

We show, however, that for compact sets property ${\rm (K^t)}$
is equivalent to the graph-null property, moreover, it is equivalent to an
even more restrictive property defined as follows.

Let $K(\nl ^2 )$ denote the family of nonempty compact subsets of $\nl ^2$.
It is well known that the Hausdorff metric
$$d(K_1 ,K_2 )=\max \{ \dist (x,K_1) , \dist (y,K_2) \colon x\in K_2 , \
y\in K_1 \}$$
$(K_1 ,K_2 \in K(\nl ^2 ))$  makes $K(\nl ^2 )$ a compact metric space
\cite[Theorem 4.26, p. 26]{K}. Let $\kk =\{ K\in K(\nl ^2 ) \colon
{\rm pr}_x K =\nl \}$, where  ${\rm pr}_x K$ denotes the projection of $K$ onto
the $x$ axis. It is easy to check that $\kk$ is a closed, hence compact subset
of $K(\nl ^2 )$.
\begin{defin}
{\rm We say that a set $A\su \sik$ is {\it typically graph-null}, if
$\la _2 (A+K)=0$ holds for every $K$ belonging to a comeager subset of $\kk$.}
\end{defin}
Since $\pr K=\nl$ for every $K\in \kk$, it follows that every element of $\kk$
contains the graph of a function defined on $\nl$. Therefore, every typically
graph-null set is graph-null.
\begin{theorem} \label{t1}
For every compact set $A\su \sik$, the following are equivalent.
\begin{enumerate}[{\rm (i)}]
\item $A$ has property ${\rm K}^t$.
\item For every $\ep >0$  there is a function $g\colon \nl \to \rr$ such that 
$\la _2 (A+\graph g)<\ep$.
\item $A$ is graph-null.
\item $A$ is typically graph-null.
\end{enumerate}  
\end{theorem}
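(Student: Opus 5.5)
The plan is to establish the cycle (i)$\akkor$(ii)$\akkor$(iv)$\akkor$(iii)$\akkor$(ii), and separately (ii)$\akkor$(i). Several links are immediate. (i)$\akkor$(ii) holds because the reformulation of ${\rm (K^t)}$ via simple functions given above already provides $h$ with $\la_2(A+\graph h)<\ep$. (iv)$\akkor$(iii) was observed right after the definition of typically graph-null sets. (iii)$\akkor$(ii) is trivial. So the real work is (ii)$\akkor$(i) and (ii)$\akkor$(iv), and both rest on compactness of $A$.

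\emph{Step 1: (ii)$\akkor$(i), via a compactness argument.} Let $g$ satisfy $\la_2(A+\graph g)<\ep$, and choose an open $G\sp A+\graph g$ with $\la_2(G)<\ep$. For each $x\in\nl$ the set $A+(x,g(x))$ is a compact subset of $G$. Hence there is $\eta_x>0$ such that $A+B((x,g(x)),\eta_x)\su G$. The intervals $(x-\eta_x,x+\eta_x)$ cover $\nl$, so a finite subfamily, centred at points $x_1<\dots<x_m$, already covers it. Partition $\nl$ into consecutive closed intervals $J_i$ with $J_i\su(x_i-\eta_{x_i},x_i+\eta_{x_i})$, and set $h\equiv g(x_i)$ on $J_i$. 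Then every point of the compact set $L=\bigcup_i J_i\times\{g(x_i)\}$ lies within $\eta_{x_i}$ of $(x_i,g(x_i))$. Therefore $A+L\su G$, and so $\la_2(A+L)<\ep$. This gives (i), and it gives slightly more: a compact witness $L$ consisting of finitely many closed horizontal segments. Note also that $A+(x,g(x))\su A+\graph g$, so (ii) forces $\la_2(A)=0$.

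\emph{Step 2: the sets $U_n=\{K\in\kk:\la_2(A+K)<1/n\}$ are open.} For compact $K$ the set $A+K$ is compact, and $\la_2(A+K)=\inf\{\la_2(V): V\sp A+K \text{ open}\}$. If $K_j\to K$ in the Hausdorff metric, then $A+K_j$ lies eventually in any prescribed open neighbourhood of $A+K$. Hence $K\mapsto\la_2(A+K)$ is upper semicontinuous on $\kk$, and each $U_n$ is open. Consequently, if every $U_n$ is dense, then $\bigcap_n U_n$ is a dense $G_\de$ set. It is therefore comeager, and each of its members $K$ has $\la_2(A+K)=0$, which is (iv).

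\emph{Step 3: density of $U_n$; this is the main obstacle.} Fix $K_0\in\kk$ and $\de>0$. Pick a finite $\de$-net $F\su K_0$. Since $\la_2(A)=0$, we have $\la_2(A+F)=0$, so adding $F$ to any candidate set costs nothing in measure and makes the candidate $\de$-close to $K_0$ from one side. It remains to add a compact set $L'$ satisfying three conditions: $\pr L'=\nl$, $L'\su\nl^2$, every point of $L'$ is within $\de$ of $K_0$, and $\la_2(A+L')<1/n$. Then $K=F\cup L'$ lies in $U_n$ within Hausdorff distance $\de$ of $K_0$.

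To build $L'$, split $\nl$ into intervals $I_j$ of length $<\de$, and pick points $(a_j,y_j)\in K_0$ with $a_j\in I_j$. Over each $I_j$ we want a finite union of horizontal segments covering $I_j$ whose heights lie in $(y_j-\de,y_j+\de)$, and whose total contribution to $\la_2(A+L')$ is small. The Step 1 witness $L$ has the right shape but possibly large vertical range, and this is the difficulty.

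My first attempt is a localization of Step 1. The function $g$ restricted to a subinterval, and translated, still gives a small set when added to $A$. So I would try to show that (ii) can be upgraded to a version with heights confined to a window of size $\de$. For this I would exploit the freedom to translate the individual segments $J_i\times\{c_i\}$ vertically, choosing the shifts by an averaging (Fubini) argument over the vertical translation parameter, so that the union stays of small measure. If that fails, I would try to extract from the small measure of $A+\graph g$ a subinterval on which $g$ oscillates little and $A+\graph(g|_{\rm sub})$ is still relatively small, and then tile $\nl$ by translated copies of that piece.
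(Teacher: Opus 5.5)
Your Steps 1 and 2 are correct. Step 1 is the paper's proof of (ii)$\akkor$(i), with a finite subcover in place of Cousin's lemma, and Step 2 is the routine openness half of (i)$\akkor$(iv). But Step 3, the density of the sets $U_n$, is the real content of the theorem, and it is not proved. Without it you obtain neither (ii)$\akkor$(iv) nor even (ii)$\akkor$(iii).

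Neither suggested route works as described. The smallness of $\la_2(A+L)$ comes from massive overlaps among the pieces $A+(J_i\times\{c_i\})$, and the number of these pieces is not under control as $\ep\to 0$. Moving segments independently preserves each piece's measure but not the overlaps. For instance, let $A$ be the segment from $(0,0)$ to $(1,1)$ and let $L$ be a staircase of step width $w$ along a line of slope $1$. Then $\la_2(A+L)=O(w)$, while $\sum_i\la_2(A+(J_i\times\{c_i\}))=\sum_i|J_i|=1$. Shifting the steps independently (e.g.\ by random amounts in $[0,\de]$) pulls the thin diagonal bands apart and produces measure of order $\de$, which does not tend to $0$ with $\ep$. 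Your fallback fails for the same reason: $\la_2(A+\graph g)$ is far from additive over subintervals, so nothing guarantees an interval $I$ on which $\la_2(A+\graph(g|_I))$ is small compared with $|I|$.

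The missing idea is the folding trick of Lemma~\ref{l1}, where all shifts are integer multiples of a single $\de$.
\begin{itemize}
\item[(a)] Let $\Phi(x,y)=(x,\,y\bmod\de)$. On each of the disjoint measurable strips $\rr\times[i\de,(i+1)\de)$ it is a translation, so $\la_2(\Phi(E))\le\la_2(E)$ for every $E$.
\item[(b)] Suppose $A\su\rr\times[-N\de,N\de)$ and $f=g\bmod\de$. Then $\Phi(A+\graph f)=\Phi(A+\graph g)$. Also $A+\graph f\su\rr\times[-N\de,(N+1)\de)$ is covered by the $2N+1$ sets $\Phi(A+\graph g)+(0,j\de)$, $|j|\le N$. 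Hence $\la_2(A+\graph f)\le(2N+1)\,\la_2(A+\graph g)$.
\item[(c)] $N$ depends only on $A$ and $\de$. So taking your Step 1 simple $g$ with $\la_2(A+\graph g)<\ep/((2N+1)m)$ gives a simple $f\colon\nl\to[0,\de)$ with $\la_2(A+\graph f)<\ep/m$.
\end{itemize}

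Then move whole blocks, as in Lemma~\ref{l2}. On your $m$ intervals $I_j$ put $h=y'_j+f$ with $y'_j=\min(y_j,1-\de)$. Each $A+\graph(h|_{I_j})$ is a vertical translate of a subset of $A+\graph f$. The finitely many endpoints only add translates of the null set $A$. So $\la_2(A+\graph h)<\ep$, and $L'=\cl(\graph h)$ is the set your Step 3 needs, with $\de$ replaced by a constant multiple. The key point is that $m$ is fixed by $K_0$ and $\de$ before $g$ is chosen, whereas the number of steps in your witness $L$ is not.
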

We will prove Theorem \ref{t1} in Section \ref{s2}.
As we mentioned already, the graphs of most continuous functions share
the properties listed in Theorem \ref{t1}. First we show that smooth functions
have this property.
\begin{theorem} \label{t2}
If $f\colon [a,b] \to \rr$ is continuously differenti\-able, then
\hbox{$\graph f$} is graph-null.  
\end{theorem}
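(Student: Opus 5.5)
By Theorem \ref{t1} (with $A=\graph f$, which is compact), it is enough to prove (ii): for every $\ep>0$ there is some $g\colon\nl\to\rr$ with $\la_2(\graph f+\graph g)<\ep$. Alternatively, I can run a Baire category argument in $\kk$ that directly yields (iv).

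The plan for the Baire argument is as follows. Since $A$ is compact, the map $K\mapsto \la_2(A+K)$ is upper semicontinuous on $\kk$. Hence each set $U_\ep=\{K\in\kk\colon \la_2(A+K)<\ep\}$ is open. If every $U_\ep$ is also dense, then $\bigcap_n U_{1/n}$ is comeager, so $A$ is typically graph-null. To check density, take $K_0\in\kk$ and $\de>0$. Cut $\nl$ into intervals $I_j$ of length $\eta<\de$, and choose points $(x_j,y_j)\in K_0$ with $x_j\in I_j$. It then suffices to solve the following \emph{local problem}: for an interval $I$ of length $\eta$, find a compact set $L$ with $\pr L=I$, vertical diameter $<\de$, and $\la_2(A+L)\le \ep\eta$. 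Taking the union of the translates $L_j+(0,y_j)$ gives a set $K\in U_\ep$ within $\de$ of $K_0$ in the Hausdorff metric, because the total area is at most $\sum_j\ep\eta=\ep$.

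For the local problem I would use uniform continuity of $f'$. Split $[a,b]$ into $N$ intervals on each of which $f'$ stays within $\ep'$ of a constant $m_i$. Then the piece $A_i$ lies in a thin parallelogram around a segment $S_i$ of slope $m_i$. Two facts are easy:
\begin{itemize}
\item a segment of slope $m$ plus a segment of the same slope has area zero;
\item $A_i$ plus a segment of slope $m_i$ and horizontal length $\ell$ has area $O(\ep'\ell\cd\la_1(\pr A_i))$.
\end{itemize}
The shear $(x,y)\mapsto(x,y-mx)$ preserves area and maps graphs to graphs, so one slope can always be normalised to $0$.

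The difficulty is that $L$ must serve all slopes $m_1\stb m_N$ at once. Concatenating segments of the slopes $m_i$ with lengths $\ell_i$ costs roughly $\sum_i \ell_i\int|f'-m_i|$ if the pieces of $A+L$ do not overlap. That is not small. The area has to be saved through \emph{overlaps}, exactly as in Besicovitch's construction. So I would build $L$ inductively, Perron-tree style. Start with one segment of slope $m_1$ over $I$. At stage $k$, replace each current segment by a short zigzag (sprouting) of segments of slopes near $m_k$, with lengths tuned so that the sets $A_k+(\text{new pieces})$ are translates of one another that overlap heavily. The earlier pieces $A_i$, $i<k$, see only a tiny change in Hausdorff distance, hence only a tiny change in area, by the semicontinuity above.

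This local Kakeya-type estimate is the main obstacle. It is also where $C^1$ (uniform control of $f'$) is really used. I would first try to deduce it from the fact that segments have property ${\rm (K^t)}$, applied to each $S_i$ in turn, with the vertical translations being free.
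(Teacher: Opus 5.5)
\textbf{Gap: the decisive step is left open, and the proposed fix is the wrong one.} You name the ``local Kakeya-type estimate'' as the main obstacle and only suggest a Perron-tree scheme (or a reduction to property ${\rm (K^t)}$ of segments), without carrying either out, so as written the theorem is not proved.

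The diagnosis behind the plan is also off: the new piece needs no overlaps at all. Elements of $\kk$ need not be connected. So at stage $k$ you may replace the current set by a staircase of short segments of slope \emph{exactly} $m_k$, separated by vertical jumps (an $m_k$-simple set). Such staircases are Hausdorff-dense in $\kk$ for every fixed slope (Lemma~\ref{l3}).

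Your second bullet is made precise in Lemma~\ref{l5} via Banach's indicatrix theorem: $\la_2(A_k+s)\le \ell\cd\int_{J_k}|f'-m_k|$ for a segment $s$ of slope $m_k$ and horizontal length $\ell$. Hence a staircase $K_k\su\nl^2$ of total horizontal length $1$ gives $\la_2(A_k+K_k)\le\int_{J_k}|f'-m_k|\le\ep'|J_k|$ by subadditivity alone.

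The cross terms $A_i+K_k$ with $i<k$ are absorbed by the upper semicontinuity you already invoked. If $K_k$ is Hausdorff-close to $K_{k-1}$, then $A_i+K_k$ lies in a thin neighbourhood of $A_i+K_{k-1}$. The heavy overlaps you wanted to engineer happen there automatically. For example, a horizontal segment of length $\ell$ plus a slope-$1$ staircase with $M$ teeth hugging $\nl\times\{0\}$ has area at most $(1+\ell)/M$, although each of the $M$ parallelograms has area $\ell/M$.

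\textbf{How the argument closes.} Split $[a,b]$ into $J_1,\dots,J_N$ with $|f'-m_i|\le\ep'$ on $J_i$. Take an $m_1$-staircase $K_1$, and inductively an $m_k$-staircase $K_k$ close enough to $K_{k-1}$ (tolerances shrinking geometrically) that $\la_2(A_i+K_N)<\ep'|J_i|+\ep/N$ persists for every $i$. Then $\la_2(\graph f+K_N)<\ep'(b-a)+\ep$. Since $K_N$ contains the graph of a function on $\nl$, this is (ii) of Theorem~\ref{t1}, and no Baire argument or local problem is needed. This is exactly the paper's Lemma~\ref{l4}, which it uses for the more general Theorem~\ref{t3}.

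For Theorem~\ref{t2} itself the paper argues differently. It quotes Sawyer's theorem: there is one function $\phi$ such that $\phi|_{[c-b,c-a]}-g$ has null range for every $C^1$ function $g$. It then applies Fubini to the vertical sections of the analytic set $\graph f+\graph\phi$.

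\textbf{Secondary flaw in your density reduction.} Choosing one point $(x_j,y_j)\in K_0$ per interval does not give a set Hausdorff-close to $K_0$ when $K_0$ has distant points over the same abscissa, e.g.\ $K_0=\nl\times\{0,1\}$. You would need a local set at each point of a finite net, with the area budget scaled to the size of the net.
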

\proof
The statement is a consequence of the following theorem of E. Sawyer
\cite{S}: {\it there exists a function $\phi \colon \rr \to \rr$ such that
whenever $f\colon [a,b] \to \rr$ is continuously differentiable, then the
range of $\phi |_{[a,b]} -f$ is of measure zero.}

The function $\phi$ constructed by Sawyer is everywhere continuous except at
the points of a countable set $C$. It follows that $\graph \phi$ is a Borel
subset of $\sik$.

Let $f\colon [a,b] \to \rr$ be continuously differentiable. Since 
$\graph f$ is compact, $(\graph f) \times (\graph \phi )$ 
is a Borel subset of the product space $\sik \times \sik$. Then the set
$E=(\graph f)+ ( \graph \phi )$, being a continuous image of $(\graph f) \times
(\graph \phi )$ is analytic, hence Lebesgue measurable. We have 
$$E= \graph f +\graph \phi =\{ (x_1 +x_2 , f(x_1 )+\phi (x_2 )) \colon
x_1 \in [a,b],\ x_2 \in \rr \} .$$
Then, for every $c\in \rr$ we have
$$E_c =\{ y\colon (c,y)\in E\} =\{ f(c-x)+\phi (x)\colon c-x\in [a,b]\} ,$$
which is the range of the function $\phi |_{[c-b,c-a]} -g$, where $g(x)=-f(c-x)$
$(x\in [c-b,c-a])$. Since $g$ is continuously differentiable, it follows
from Sawyer's theorem that $\la _1 (E_c )=0$ for every $c$. Then, by Fubini's
theorem, we have $\la _2 (E)=0$, proving that $\graph f$ is
graph-null. \hfill $\square$

\begin{cor}\label{c1}
If a set $A\su \sik$ can be covered by countably many graphs of 
continuously differentiable functions, then $A$ is graph-null.
\end{cor}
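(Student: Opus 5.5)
The idea is to use a single function $\phi$ for all the pieces at once. The proof of Theorem \ref{t2} works for every $C^1$ function with the same Sawyer function $\phi$, so we can handle all the graphs simultaneously and use countable subadditivity.

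Let $A\su \bigcup_{n=1}^\infty \graph f_n$, where each $f_n\colon [a_n ,b_n ]\to \rr$ is continuously differentiable. Let $\phi\colon \rr \to \rr$ be Sawyer's function, and put $g=\phi |_{\nl}$. Since $\graph g\su \graph \phi$, for every $n$ we have
$$A_n := \graph f_n +\graph g \su \graph f_n +\graph \phi .$$
The argument in the proof of Theorem \ref{t2} shows that $E_n=\graph f_n +\graph \phi$ is analytic, hence measurable. Each of its vertical sections $(E_n)_c$ is the range of $\phi|_{[c-b_n ,c-a_n ]}-h_c$, where $h_c(x)=-f_n(c-x)$ is $C^1$. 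By Sawyer's theorem this range is null, so Fubini gives $\la _2(E_n)=0$.

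It remains to combine the pieces. We have
$$A+\graph g\su \bigcup_n (\graph f_n +\graph g)\su \bigcup_n E_n ,$$
and since $\la _2$ is an outer measure and each $E_n$ is null, $\la _2 (A+\graph g)=0$. Hence $A$ is graph-null.

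The only point needing care is that the function $g$ must not depend on $n$. This is exactly why we use Sawyer's universal $\phi$ rather than just citing Theorem \ref{t2} for each piece. Applying Theorem \ref{t2} separately would give functions $g_n$ that vary with $n$, and there is no obvious way to merge them into one. No measurability of $A$ itself is needed, since we only bound the outer measure by that of a null set.
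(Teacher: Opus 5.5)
Your proposal is correct and is essentially the paper's own argument. The paper's one-line proof asserts $\la_2(A+\graph \phi)=0$ for Sawyer's single universal function $\phi$, which is exactly what you obtain by applying the proof of Theorem~\ref{t2} to each covering graph and then using countable subadditivity; your explicit restriction $g=\phi|_{\nl}$, which gives $g$ the domain required by the definition of graph-null, is a detail the paper leaves implicit.
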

Indeed, we have $\la _2 (A+\graph \phi )=0$, where $\phi$ is Sawyer's function.

\begin{remarks}\label{r1}
{\rm 1.
It is clear that every circle can be covered by countable many graphs of 
continuously differentiable functions. Therefore, {\it every circle is
graph-null and, consequently, has property ${\rm K^t}$} (by Theorem \ref{t1}).
The same is true for polygons without vertical sides.

2. Let $f(x)=\sin (1/x)$ if $x\in (0,1]$, and let $f(0)=0$. Then $\graph f$
is graph-null, as it can be covered by countable many graphs of continuously
differentiable functions. On the other hand, $\graph f$ does not have
property ${\rm K}^t$, since $\graph f$ intersects $(0,\ep )\times \{ y\}$
for every $\ep >0$ and $y\in [-1,1]$. This example shows that the graph-null
property does not imply ${\rm K}^t$ even for graphs of functions that are
continuous everywhere except one point.

3. It is clear that if a set $A$ has any of the properties (i)-(iv) listed
in Theorem \ref{t1}, then every subset of $A$ has the same property. Thus we
may ask whether or not the family of sets having any of these properties form
an ideal, or even a $\si$-ideal.

The answer is positive in the case of property (iv). The family of
typically graph-null sets is a $\si$-ideal, since the intersection of countable
many comeager subsets of $\kk$ is also comeager in $\kk$.

The family of sets having property ${\rm K}^t$ is not a $\si$-ideal. Indeed,
let $f$ be as in Remark 2 above, and let $f_n$ be the restriction of $f$
to the interval $[1/n,1]$. Then $\graph f_n$ has property ${\rm K}^t$ by
Theorem \ref{t2} for every $n=2,3,\ldots$. On the other hand,
$\bigcup_{n=2}^\infty \graph f_n =(\graph f) \se \{ (0,0)\}$ does not have
property ${\rm K}^t$.

It is not clear if the family of sets having property ${\rm K}^t$ is an ideal
or not. It is also open whether the sets having properties (ii) and (iii) of
Theorem \ref{t1} form ideals. 
}
\end{remarks}  
    
The following theorem extends Theorem \ref{t2} to absolutely continuous curves.
\begin{theorem}\label{t3}
Let $\ga =(\ga _1 ,\ga _2 )\colon [a,b] \to \sik$ be a curve such that
$\ga _1$ and $\ga _2$ are absolutely continuous functions on $[a,b]$. If 
$\ga '_1 (t)\ne 0$ for a.e. $t\in [a,b]$, then  $\ga ([a,b])$ is graph-null.
\end{theorem}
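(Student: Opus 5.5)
The plan is to reduce Theorem \ref{t3} to Corollary \ref{c1}, up to a set whose projection onto the $y$ axis is null, and then use the $\si$-ideal of typically graph-null sets from Remark \ref{r1}.3.

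\emph{Step 1: decomposition of the parameter interval.} Let $N=\{t\in [a,b]\colon \ga_1'(t)$ or $\ga_2'(t)$ does not exist, or $\ga'_1(t)=0\}$, so that $\la_1(N)=0$. Since $\ga_2$ is absolutely continuous, it has the Lusin (N) property, so $\la_1(\ga_2(N))=0$. Hence the projection of $\ga(N)$ onto the $y$ axis is null, and $\ga(N)$ is graph-null. In fact $\la_2(\ga(N)+K)=0$ for every $K\in\kk$, because each horizontal section of $\ga(N)+K$ is contained in a union of translates of $\ga_2(N)$ by the values in ${\rm pr}_y K$. Fubini in the $y$ direction therefore gives measure zero, although measurability needs a small argument.

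\emph{Step 2: covering the remaining part by $C^1$ graphs.} On $[a,b]\se N$, I would use a Lusin/Whitney-type $C^1$ approximation. Since $\ga$ is differentiable at every point of $[a,b]\se N$, the standard result (Federer 3.1.8 / Whitney extension) gives countably many closed sets $F_n$ covering $[a,b]\se N$ up to a null set $N'$, together with $C^1$ maps $\psi_n\colon\rr\to\sik$ satisfying $\psi_n=\ga$ and $\psi_n'=\ga'$ on $F_n$. Enlarging $N$ by $N'$, which is still null, does not spoil Step 1. Refining further, for each $n$ and each density point of $F_n$ where $\ga_1'\ne 0$, we may assume that $\psi_{n,1}'\ne 0$ on a small interval $I$ around the point, with countably many such intervals covering $F_n$. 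On $I$, $\psi_{n,1}$ is a $C^1$ diffeomorphism onto its image, so $\psi_n(I)$ is the graph of the $C^1$ function $\psi_{n,2}\circ\psi_{n,1}^{-1}$. Thus $\ga([a,b]\se N)$ is covered by countably many graphs of $C^1$ functions, and the part of $F_n$ lying outside such intervals is null and goes into the Step 1 set.

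\emph{Step 3: combining the pieces.} Here I would take Sawyer's function $\phi$. For the $C^1$ part, $\la_2(\ga([a,b]\se N)+\graph\phi)=0$ by Corollary \ref{c1}. For $\ga(N)$, the horizontal sections of $\ga(N)+\graph\phi$ over $y$ are nonempty only for $y\in\ga_2(N)+\phi(\rr)$, which need not be null. I would therefore fix this using Theorem \ref{t1}. The set $\ga([a,b])$ is compact, so it suffices to show that it is typically graph-null. By Remark \ref{r1}.3 it is enough that each piece is typically graph-null. For $\ga(N)$ this holds by Step 1 for all $K\in\kk$ with $\la_1({\rm pr}_y K)=0$, and those form a comeager set since $K\mapsto\la_1({\rm pr}_yK)$ is upper semicontinuous and small near finite sets. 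Each $C^1$ graph is graph-null by Theorem \ref{t2}, hence typically graph-null by Theorem \ref{t1} applied to a compact piece, that is, a $C^1$ graph over a closed interval. The main obstacle is Step 2: writing the $C^1$ approximation cleanly and arranging $\psi_{n,1}'\neq 0$ so that the pieces are genuinely compact $C^1$ graphs.
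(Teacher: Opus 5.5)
The $C^1$ part of your argument is fine: Step 2 and the use of Theorems~\ref{t2}, \ref{t1} and the $\si$-ideal for the countably many compact $C^1$ graphs. Step 2 is not the real obstacle. The genuine gap is the handling of $\ga(N)$.

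The claim in Step 1 that $\la_2(\ga(N)+K)=0$ for every $K\in\kk$ is false (take $K=\nl^2$). The weaker claim used in Step 3, for all $K\in\kk$ with $\la_1({\rm pr}_y K)=0$, fails for exactly the reason you spotted with Sawyer's function. The $y$-projection of $\ga(N)+K$ is $\ga_2(N)+{\rm pr}_y K$, a sum of two null sets, which need not be null. (Indeed $\phi(\rr)$ is itself null, by Sawyer's theorem with $f=0$, so the two situations are identical.)

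Here is a concrete counterexample. Let $C$ be the Cantor set and $\ga(t)=(2t+\tfrac12\dist (t,C),\,t)$ on $\nl$.
\begin{itemize}
\item We have $\ga_1'\in\{3/2,5/2\}$ a.e. But $\dist (\cdot ,C)$ attains its minimum at each $c\in C$ while, by porosity, $\dist (c+h,C)\ne o(|h|)$. So $\ga_1$ is differentiable at no point of $C$, and $C\su N$.
\item Take $K=\nl\times C\in\kk$, so that $\la_1({\rm pr}_yK)=0$. Each line $y=c+c'$ $(c,c'\in C)$ meets the compact set $\ga(C)+K$ in a set containing a segment of length $1$.
\item Since $C+C=[0,2]$, this gives $\la_2(\ga(N)+K)\ge 2$.
\end{itemize}

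No better genericity argument can rescue this if it uses only $\la_1(\ga_2(N))=0$:
\begin{itemize}
\item Theorem~\ref{t1} turns a null projection into typical graph-nullness only for compact sets.
\item $N$ need not lie in a $\si$-compact null set. By Zahorski's theorem the non-differentiability set of a Lipschitz $\ga_1$ can contain a dense $G_\de$ null set, and the remainder of a Whitney-type approximation is a $G_\de$ set.
\item For $E=\{x\colon \dist (x,2^{-n}\zz)<4^{-n}\text{ for infinitely many } n\}$ one checks that $E+L=\rr$ for a comeager set of compact $L\su\nl$. So $\nl\times E$ has null $y$-projection but is not typically graph-null.
\end{itemize}

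What is missing is a quantitative use of absolute continuity, which lets you dispense with $N$ and the $\si$-ideal altogether. Since $\ga([a,b])$ is compact, by Theorem~\ref{t1} it suffices to find, for each $\ep>0$, a simple $h$ with $\la_2(\ga([a,b])+\graph h)<\ep$.
\begin{itemize}
\item The compact parameter sets from Step 2 cover $[a,b]$ up to a null set. So finitely many of them have a union $P$ with $\int_{[a,b]\se P}|\ga_2'|<\ep/2$.
\item The corresponding finitely many compact $C^1$ graphs form a compact graph-null set (Corollary~\ref{c1}). By Theorem~\ref{t1} it has property ${\rm K}^t$, which gives a simple $h$ with $\la_2(\ga(P)+\graph h)<\ep/2$.
\item The relatively open set $[a,b]\se P$ is a union of countably many intervals with endpoints $a_i<b_i$. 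Apply Lemma~\ref{l5}(i) on each interval of constancy of $h$; the finitely many remaining points of $\graph h$ only add null translates of an arc. This gives $\la_2(\ga([a_i,b_i])+\graph h)\le V(\ga_2;[a_i,b_i])$, and these bounds sum to less than $\ep/2$.
\end{itemize}

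Once repaired, your proof is a genuine alternative to the paper's, but it rests on Sawyer's theorem and a $C^1$ Lusin-type approximation. The paper is self-contained. It uses Lebesgue points and a Vitali cover by \emph{open} intervals $J_i$, on which $\ga(\cl J_i)$ sweeps little area along segments of slope $m_i$ (Lemma~\ref{l5}(ii)). It then builds $K$ as a limit of $m_i$-simple sets (Lemma~\ref{l4}). There the exceptional set $F=[a,b]\se\bigcup J_i$ is compact, which is exactly what makes $\ga(F)+K$ controllable.
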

\begin{cor}\label{c2}
If $f\colon [a,b]\to \rr$ is absolutely continuous, then its graph is
graph-null. 
\end{cor}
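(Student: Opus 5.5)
Corollary \ref{c2} should follow directly from Theorem \ref{t3} applied to the curve $\ga (t)=(t,f(t))$, $t\in [a,b]$. Both coordinates are absolutely continuous: $\ga _1 (t)=t$ trivially, and $\ga _2 =f$ by hypothesis. Moreover $\ga '_1 (t)=1\ne 0$ for every $t\in [a,b]$, in particular for almost every $t$. The image $\ga ([a,b])$ is exactly $\{ (t,f(t))\colon t\in [a,b]\} =\graph f$. So Theorem \ref{t3} gives that $\graph f$ is graph-null.

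There is no real obstacle here, since all the work is in Theorem \ref{t3}. The only check is that the hypotheses of Theorem \ref{t3} are met literally, with no need to pass to subsets or countable unions. That matters because, by Remark \ref{r1}.3, it is not known whether the graph-null sets (property (iii)) form an ideal. If one instead wanted the stronger conclusion that $\graph f$ has property ${\rm K}^t$, one would also use that $\graph f$ is compact, since $f$ is continuous, and then invoke Theorem \ref{t1}.
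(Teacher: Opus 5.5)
Your proposal is correct and matches the paper's implicit argument: the paper states the corollary without proof right after Theorem \ref{t3}, as that theorem applied to $\ga(t)=(t,f(t))$, whose first coordinate has derivative $1$ everywhere. One minor remark: since $\graph f$ is compact, Theorem \ref{t1} makes property ${\rm K}^t$ equivalent to being graph-null for it, not a stronger conclusion.
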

The proof of Theorem \ref{t3}, provided in Section \ref{s3}, does not use
Sawyer's theorem.

The statement that
the graph of ``most continuous functions'' is graph-null is made precise
by the following theorem.
\begin{theorem}\label{t4}
The set of continuous functions $f\in C[a,b]$ such that $\graph f$ is
graph-null is comeager in $C[a,b]$.
\end{theorem}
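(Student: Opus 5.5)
Since $\graph f$ is compact, Theorem \ref{t1} shows that $\graph f$ is graph-null if and only if it has property (ii): for every $\ep>0$ there is a $g$ with $\la_2(\graph f+\graph g)<\ep$. The natural plan is to show that for each fixed $n$ the set
$$\mathcal U_n=\{f\in C[a,b]\colon \exists\, \text{a compact } K\in\kk \text{ with } \la_2(\graph f+K)<1/n\}$$
contains a dense open subset of $C[a,b]$. Then every $f$ in the dense $G_\de$ set $\bigcap_n \mathcal U_n$ satisfies (ii), because each $K\in\kk$ contains the graph of a function on $\nl$. Hence $\graph f$ is graph-null by Theorem \ref{t1}.

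\emph{Openness.} Fix $f$ and a compact $K\in\kk$ with $\la_2(\graph f+K)<1/n$. The set $\graph f+K$ is compact, so by outer regularity there is $\de>0$ with $\la_2((\graph f+K)+B(0,\de))<1/n$. If $\|f_1-f\|_\infty<\de$, then $\graph f_1\su \graph f+B(0,\de)$, so $\la_2(\graph f_1+K)<1/n$. Thus $\mathcal U_n$ itself is open, and we work with $K$ compact throughout.

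\emph{Density.} Polynomials, or $C^1$ functions, are dense in $C[a,b]$. For such $f$, Theorem \ref{t2} says $\graph f$ is graph-null. Since $\graph f$ is compact, Theorem \ref{t1} (iii)$\Rightarrow$(iv) gives that $\la_2(\graph f+K)=0$ for a comeager set of $K\in\kk$. In particular there is such a compact $K$, so $f\in\mathcal U_n$. Alternatively, Corollary \ref{c2} already covers all absolutely continuous $f$, which are dense. So $\mathcal U_n$ is open and dense, and the intersection over $n$ is comeager.

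The only delicate point is that openness needs a compact witness $K$; an arbitrary function $g$ from (ii) would not do. This is why I pass through property (iv) of Theorem \ref{t1} rather than (ii) or (iii). With that, the argument is a routine Baire category argument, and the real work is already contained in Theorems \ref{t1} and \ref{t2}.
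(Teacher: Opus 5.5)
Your proof is correct, but it extracts the comeager set differently from the paper. The paper works in the product space $C[a,b]\times\kk$. It shows that $U_\ep=\{(f,K)\colon \la_2((\graph f)+K)<\ep\}$ is open and dense, with density coming from the same $C^1$ argument you use. Hence $\bigcap_n U_{1/n}$ is a dense $G_\de$ in the product. The paper then applies the Kuratowski--Ulam theorem: for comeager many $f$, the section $\{K\colon \la_2((\graph f)+K)=0\}$ is comeager in $\kk$, i.e.\ $\graph f$ is typically graph-null.

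Your $\mathcal U_n$ is exactly the projection of $U_{1/n}$ onto $C[a,b]$. Projecting before intersecting lets you work with the Baire category theorem in $C[a,b]$ alone and avoid Kuratowski--Ulam. The cost is that a generic $f$ is only shown to satisfy property (ii). You then upgrade this to graph-null via the implication (ii)$\Rightarrow$(iii) of Theorem~\ref{t1}, which uses the compactness of $\graph f$ a second time.

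Since $\graph f$ is compact, Theorem~\ref{t1} makes the two conclusions equivalent, so nothing is lost. The paper's route yields property (iv) directly, together with the joint statement that $\la_2((\graph f)+K)=0$ for a comeager set of pairs $(f,K)$. Yours is slightly more elementary. Your remark that openness needs a compact witness $K$, rather than an arbitrary graph from (ii), is precisely the point behind the paper's ``it is easy to see'' that $U_\ep$ is open.
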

\proof
It is easy to see that the set $U_\ep =\{ (f,K)\in C[a,b] \times \kk \colon 
\la _2 ((\graph f)+K)<\ep \}$ is open in $C[a,b] \times \kk$ for every
$\ep >0$. Since the set of continuously differentiable functions is dense in
$C[a,b]$, and every such function is typically graph-null by Theorems \ref{t1}
and \ref{t2}, it follows that $U_\ep$ is everywhere dense in $C[a,b]
\times \kk$ for every $\ep >0$. Therefore, the set
$$U =\{ (f,K)\in C[a,b] \times \kk \colon 
\la _2 ((\graph f)+K)=0 \} =\bigcap_{n=1}^\infty U_{1/n}$$
is a dense $G_\de$ set in $C[a,b] \times \kk$. Then, by the Kuratowski-Ulam
theorem \cite[Theorem 8.41, p. 53]{K}, there is a comeager subset $V\su C[a,b]$
such that, for every $f\in V$, $\{ K\in \kk \colon \la _2 ((\graph f)+K)=0 \}$
is comeager in $\kk$. \hfill $\square$

Since a typical continuous function is nowhere differentiable, we obtain that
{\it there are nowhere differentiable continuous functions such that their
graph is graph-null.} We show, however, that the graph-null property, although
is typical among continuous functions, is not shared by all continuous
functions.
\begin{theorem}\label{t5}
There exists a continuous function $f\in C\nl$ such that $\graph f$
is not graph-null.
\end{theorem}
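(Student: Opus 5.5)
By Theorem~\ref{t1}, applied to the compact set $\graph f$, it is enough to construct a continuous $f$ for which property (ii) fails. Equivalently, by the reformulation of ${\rm (K^t)}$ given before Theorem~\ref{t1}, it is enough to find a constant $c>0$ such that
$$\la _2 (\graph f+\graph h)\ge c$$
for \emph{every} simple function $h\colon \nl\to \rr$. The advantage of restricting to simple $h$ is that the sum set becomes very concrete. Let $0=x_0<\ldots<x_k=1$ and let $h=c_i$ on $I_i=(x_{i-1},x_i)$. Then the vertical section of $E=\graph f+\graph h$ at the abscissa $u$ contains
$$\bigcup_{i=1}^k \bigl(f((u-I_i)\cap\nl)+c_i\bigr).$$
This is a union of $k$ intervals, since images of intervals under a continuous map are intervals. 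So I must rule out the possibility that, for most $u$, these translated intervals all overlap almost perfectly. Note that the shifts $c_i$ are fixed while the windows $u-I_i$ slide with $u$.

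I would build $f=\sum_n f_n$ inductively as a rapidly convergent series of highly oscillating continuous pieces, in the spirit of lacunary Weierstrass-type sums. The frequencies $N_n$ grow very fast and the amplitudes $a_n$ tend to $0$ slowly. The guiding requirement at stage $n$ is the following. For every partition whose mesh lies in a range adapted to $N_n$, and for every choice of the $c_i$, the set of $u$ at which the sliding images $f(u-I_i)+c_i$ cover at least a fixed fraction of a certain range should have measure bounded below. The mechanism is that the fine oscillation of $f_n$ makes the images $f(u-I_i)$ move ``independently'' as $u$ varies. Then $k$ fixed shifts cannot keep all these images stacked on top of each other for a large set of $u$.

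To turn this into a quantitative bound I would use a multiplicity (Cauchy--Schwarz) argument. Let $\nu$ be the pushforward of $dx\,dt$ under $(x,t)\mapsto(x+t,f(x)+h(t))$; it is a probability measure concentrated on $E$. If $\nu$ has density $\rho$, then $1=\bigl(\int_E \rho\bigr)^2\le \la _2(E)\int\rho^2$. So it suffices to bound $\|\rho\|_2$ uniformly in $h$, possibly after first smoothing $\nu$ at a scale tied to the last relevant stage of the construction. Because $\hat\nu(\xi,\eta)=F(\xi,\eta)H(\xi,\eta)$ with $F(\xi,\eta)=\int_0^1 e^{2\pi i(\xi x+\eta f(x))}dx$ and $|H|\le 1$, this reduces to estimates on the oscillatory integrals $F$. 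These I would control stage by stage through the lacunary structure, using van der Corput type bounds for the dominant term $f_n$ at the relevant frequency scale.

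The main obstacle is uniformity in $h$. The graph of a single continuous function always has measure zero, so $\int|F|^2$ diverges, and the bound must exploit cancellation in $H$ that holds for every simple $h$. Equivalently, one must use that the horizontal extents of the steps of $h$ add up to $1$. If the Fourier route fails, my fallback is a purely combinatorial inductive construction. I would choose $f$ on dyadic-like scales so that for every interval $J$ of length $\de_n$, the restriction $f|_J$ has oscillation at least $a_n$ and is ``transversal'' to every other window $J'$ at a fixed relative position. I would then prove directly that $\la_1(E_u)\ge c$ on a set of $u$ of measure at least $c$, by a pigeonhole argument over the steps $I_i$ grouped by length scale.
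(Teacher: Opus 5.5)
Your outer framework is sound, and it is the one the paper uses at the end. You reduce to simple $h$ and lower-bound $\la_2(E)$ by Cauchy--Schwarz from an $L^2$ bound on a (smoothed) measure concentrated on $E$; the paper does exactly this with $\sum_i\chi_{C_i}$, $C_i=A_k+(i/N_k,h(i/N_k))$. But everything hinges on bounding $\|\rho\|_2$ uniformly in $h$, and the proposal contains no argument for this. You construct no $f$, and the Fourier route as stated cannot work. Once you use $|H|\le 1$ you are left with $\int|F|^2=\infty$, as you note yourself. Pointwise (van der Corput) decay of $F$ cannot repair this: about $|H|^2$ you only know that it is the Fourier transform of the positive measure $\sigma_h*\tilde\sigma_h$ ($\sigma_h$ being the image of $dt$ under $t\mapsto(t,h(t))$). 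The only feature of that measure independent of $h$ is its horizontal marginal. The fallback (``transversal'' windows, pigeonhole over scales) is too vague to check.

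The missing idea is to move to physical space. There, formally,
$\|\rho\|_2^2=\iint D\bigl(t-t',h(t)-h(t')\bigr)\,dt\,dt'$, where $D$ is the density of the distribution of $(x-x',f(x)-f(x'))$ under $dx\,dx'$. The values of $h$ are arbitrary, but the horizontal distances $t-t'$ are not, because the $t$-marginal of $\sigma_h$ is Lebesgue measure. So the estimate that makes the argument uniform in $h$ is a self-overlap bound for the graph that is uniform in the vertical shift and integrable in the horizontal one: $\sup_y D(s,y)\le\phi(|s|)$ with $\phi\in L^1$. This is the paper's Lemma \ref{l8} in discretized form, $|A_k\cap(A_k+(x,y))|<2^{j-2k}$ for every $y$ and every $x\ge 1/N_j$. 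After that, the second moment is bounded simply by counting pairs with $N_k/N_j\le i_2-i_1<N_k/N_{j-1}$.

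Such a bound is a special property: by Theorem \ref{t4} a typical continuous function is graph-null, so it must fail it. It therefore has to be engineered, and nothing in your sketch explains why a lacunary oscillating sum would have it. The paper obtains it from nested rectangles of height $2^{-k}$. Each column keeps the upper or lower half according to a $\pm1$ sequence whose partial sums and lagged correlations $\sum\eta_i\eta_{i+k}$ are small on every window (Lemma \ref{l6}, via fourth and sixth moments). Lemma \ref{l7} then shows that each stage cuts the overlap by a factor close to $1/4$ while the area only halves. Finally, shrinking each rectangle homothetically makes $\bigcap_k\tilde A_k$ the graph of a continuous function on a compact set, which is then extended to $\nl$.
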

Theorem \ref{t5} will be proved in Section \ref{s4}.

Let $M[a,b]$ denote the set of functions that are continuous and increasing
on $[a,b]$. Then $M[a,b]$ is a closed subspace of $C[a,b]$ and, consequently,
it is a Polish space under the supremum metric.
\begin{theorem}\label{t6}
The set of increasing continuous functions $f\in C[a,b]$ such that $\graph f$
is graph-null is comeager in $C[a,b]$.
\end{theorem}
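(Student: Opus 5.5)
Theorem \ref{t6} asserts comeagerness in $M[a,b]$; the phrase ``in $C[a,b]$'' in the statement should read ``in $M[a,b]$'', the space introduced just before it. I will copy the proof of Theorem \ref{t4}, with $M[a,b]$ in place of $C[a,b]$. The new work is to find a dense class of increasing functions whose graphs are typically graph-null.

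First, for every $\ep>0$ the set
$$U_\ep=\{(f,K)\in M[a,b]\times\kk\colon \la_2((\graph f)+K)<\ep\}$$
is open in $M[a,b]\times\kk$. The argument is the one used for Theorem \ref{t4}. The map $(f,K)\mapsto (\graph f)+K$ is continuous into the space of compact sets with the Hausdorff metric. Moreover, $\la_2$ is upper semicontinuous on compact sets: if $\la_2(C)<\ep$, choose an open neighbourhood $G\sp C$ with $\la_2(G)<\ep$; every compact set close enough to $C$ lies inside $G$.

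Second, $U_\ep$ is dense. Given $f\in M[a,b]$ and $\de>0$, I approximate $f$ uniformly within $\de$ by an increasing function that is absolutely continuous. The piecewise linear interpolation of $f$ on a fine partition works, since $f$ is uniformly continuous. If a strictly increasing approximant is wanted, one can also add $\de x$, though this is not needed. The graph of such a function is graph-null by Corollary \ref{c2}, and since it is compact, it is typically graph-null by Theorem \ref{t1}. Hence for every nonempty open $W\su\kk$ there is $K\in W$ with $\la_2(\graph f_\de+K)=0$, so $(f_\de,K)\in U_\ep$. Because products of open sets form a base of $M[a,b]\times\kk$, this gives density.

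Consequently $U=\bigcap_n U_{1/n}$ is a dense $G_\de$ in $M[a,b]\times\kk$. Both factors are Polish: $M[a,b]$ because it is a closed subspace of $C[a,b]$, and $\kk$ because it is compact metric. So the Kuratowski--Ulam theorem applies and yields a comeager $V\su M[a,b]$ such that for each $f\in V$ the section $\{K\colon \la_2(\graph f+K)=0\}$ is comeager in $\kk$. Each such $\graph f$ is therefore typically graph-null, hence graph-null.

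The only delicate step is density, because the approximants must stay inside $M[a,b]$. Piecewise linear interpolation of a monotone function is monotone, so this holds, and Corollary \ref{c2} replaces the $C^1$ argument used before. Openness needs the semicontinuity remark above, which is routine.
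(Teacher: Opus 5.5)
Your proof is correct and matches the paper's own proof, which just says ``the proof is the same as that of Theorem \ref{t4}'': openness of $U_\ep$, density via a dense class of increasing functions with graph-null graphs (your piecewise linear approximants with Corollary \ref{c2} work just as well as increasing $C^1$ approximants with Theorem \ref{t2}), and Kuratowski--Ulam in the Polish space $M[a,b]\times\kk$. You are also right that the conclusion must be read in $M[a,b]$: $M[a,b]$ is nowhere dense in $C[a,b]$, so no subset of it can be comeager there.
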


The proof is the same as that of Theorem \ref{t4}. It is known that
a typical element of $M[a,b]$ is singular; that is, its derivative is
zero a.e. Therefore, we find that {\it there are singular, increasing and
continuous functions such that their graph is graph-null.}

We conclude this section with the following open question:

{\it Is the graph of every monotonic and continuous function graph-null?}

\section{Proof of Theorem \ref{t1}}\label{s2}
We denote by $\cl A$ the closure of the set $A$. 
\begin{lemma} \label{l1}
If a bounded set $A$ has property ${\rm K}^t$, then for every $\ep >0$ there
exists a simple function $h\colon \nl \to [0,\ep ]$ such that
$\la _2 (A+\graph h) <\ep$.
\end{lemma}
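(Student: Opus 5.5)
The plan is to start from an arbitrary simple function given by property ${\rm K}^t$ and modify its values in two stages. First I make the values bounded, with a bound depending only on the size of $A$. Then I push them into $[0,\ep]$. Throughout, fix $R$ with $A\su [-R,R]^2$. Let $\de>0$ be small, to be chosen in terms of $\ep$. By the reformulation of ${\rm K}^t$ in Section \ref{s1}, there is a simple $h\colon \nl\to\rr$ with $\la _2(S)<\de$, where $S=A+\graph h$.

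\emph{Step 1: reduction modulo a large period.} Let $M\ge 2R$, and define $h_1(x)=h(x)-m(x)M$, where $m(x)=\lfloor h(x)/M\rfloor$. Then $h_1$ is simple with values in $[0,M)$. A point $p=a+(x,h_1(x))$ with $a\in A$ satisfies $p+(0,m(x)M)\in S$. Hence $p\in S-(0,mM)$ for some integer $m$, and $p$ lies in the strip $\rr\times[-R,M+R]$. Therefore
$$A+\graph h_1\su \bigcup_{m\in\zz}\bigl((S\cap (\rr\times[mM-R,(m+1)M+R]))-(0,mM)\bigr).$$
Since $M\ge 2R$, each point of $S$ lies in at most three of the strips $\rr\times[mM-R,(m+1)M+R]$. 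It follows that $\la _2(A+\graph h_1)\le 3\la _2(S)<3\de$. This step is routine.

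\emph{Step 2: from $[0,M]$ to $[0,\ep]$.} This is the step I expect to be the main obstacle. The same wrapping trick with period $\ep$ fails because $A$ is taller than $\ep$, so the strips overlap unboundedly often. What I would try is to exploit the freedom in the definition of ${\rm K}^t$: vertical moves are free, and the target translate $A'$ may be chosen at will. Concretely, I would apply ${\rm K}^t$ with target translate $A'=A+(\eta,0)$ for a tiny $\eta>0$. This yields a motion with total horizontal displacement $\eta$, and after Step 1 its heights lie in $[0,M]$. Next I would concatenate many such short motions and interleave them with free vertical returns, so as to cover the whole interval $\nl$ in the $x$-direction. Finally, I would apply the anisotropic rescaling $(x,y)\mapsto(x,\la y)$ to the resulting configuration. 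This rescaling multiplies areas by $\la$, so it is harmless for smallness, but it changes $A$. For that reason I would use it only on auxiliary sets and not on $A$ itself.

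If the rescaling idea does not go through, I would fall back on an averaging argument over vertical shifts. For $t\in[0,\ep]$, consider $h_t=(h_1+t)\bmod \ep$. I would estimate $\int_0^\ep \la _2(A+\graph h_t)\,dt$ by Fubini in terms of $\la _2(A+\graph h_1)$ and the number $\lceil M/\ep\rceil$ of wraps. Choosing $\de$ so small that $3\de\cd \lceil (2R)/\ep\rceil\cd 3<\ep$ would then give some $t$, and thus a simple $h=h_t$ with values in $[0,\ep]$, such that $\la _2(A+\graph h)<\ep$. The boundedness of $A$ enters only through the constant $R$, both in Step 1 and in this final choice of $\de$.
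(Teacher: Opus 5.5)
Your Step 1 is correct, and it already is the whole proof; the ``main obstacle'' in Step 2 does not exist. Step 1 uses $M\ge 2R$ only to make the overlap constant equal to $3$. Run the same argument with $M=\ep$. Then $h_1=h-\ep\lfloor h/\ep\rfloor$ is simple with values in $[0,\ep)$, and $A+\graph h_1$ is covered by the sets $(S\cap T_m)-(0,m\ep)$, where $T_m=\rr\times[m\ep-R,(m+1)\ep+R]$. A point of ordinate $y$ lies in $T_m$ only if $(y-R)/\ep-1\le m\le (y+R)/\ep$, so it lies in at most $2R/\ep+2$ strips. This multiplicity is not unbounded: it is a constant once $\ep$ and $R$ are fixed. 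Since $\de$ is chosen after $\ep$, taking $\de<\ep/(2R/\ep+2)$ gives $\la _2(A+\graph h_1)<\ep$.

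This is precisely the paper's argument. The paper writes $A\su\rr\times[-N\ep,N\ep)$ and picks a simple $g$ with $\la _2(A+\graph g)<\ep/(2N+1)$. It sets $h=g\bmod\ep$ and covers $A+\graph h$ by $2N+1$ vertical translates of $\Phi(A+\graph g)=\Phi(A+\graph h)$. Here $\Phi(x,y)=(x,y\bmod\ep)$ does not increase outer measure, so this is the same bounded-overlap count in a different form.

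So Step 2 should simply be dropped. Its first plan (concatenating short motions, anisotropic rescaling) is not an argument as it stands, and, as you note yourself, the rescaling would change $A$.

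The fallback does work, but not through averaging. No integral over $t$ and no Fubini is needed; Fubini would also require a measurability comment, since $A$ need not be measurable. For any single $t$, e.g.\ $t=0$, one has $h_t(x)=h_1(x)+t-j\ep$ with $0\le j\le\lceil M/\ep\rceil$. Hence $A+\graph h_t$ is covered by $\lceil M/\ep\rceil+1$ vertical translates of $A+\graph h_1$, and subadditivity gives $\la _2(A+\graph h_t)<3\de(\lceil M/\ep\rceil+1)$. That finite wrap count is exactly the mechanism you declared to fail with period $\ep$.
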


\begin{proof}
For $x\in \rr$ and $\ep>0$ we define
$$x \bmod \ep = x-\ep\lfloor x/\ep \rfloor,$$
so that $(x \bmod \ep) \in [0,\ep)$. Let $\Phi:\rr^2 \to \rr^2$ be the function
given by
$$\Phi(x,y)=(x, \,y \bmod \ep).$$
We show that $\la _2 (\Phi (E))\le \la _2 (E)$ for every $E\su \sik$. Put
$$S_i =\{ (x,y)\colon x\in \rr , \ y\in [i\ep ,(i+1)\ep )\}$$
for every $i\in \zz$. Clearly, $\Phi (E)=\bigcup_{i\in \zz} ((E\cap S_i )-
(0,i\ep ))$. Since the sets $S_i$ are measurable and disjoint, we have
$\la _2 (E)= \sum_{i\in \zz} \la _2 (E\cap S_i )$, and thus $\la _2 (\Phi (E))
\le \sum_{i\in \zz} \la _2 (E\cap S_i ) = \la _2 (E)$.

As $A$ is bounded, there is a positive integer $N$ such that
$$A\subset \rr\times [-N\ep, \,N\ep).$$
Since $A$ has property ${\rm K}^t$, there exists a simple function
$g\colon \nl \to \rr$ such that $\la _2 (A+\graph g) <\ep/(2N+1)$.
Let $h\colon [0,1]\to [0,\ep)$ be the (simple) function given by $h(x)=g(x) \bmod \ep$.

Notice that 
$$A+\graph h \subset \rr\times [-N\ep, \,(N+1)\ep),$$
hence $2N+1$ many translates of $\Phi(A+\graph h)=\Phi(A+\graph g)$ cover $A+\graph h$.
By translation invariance and subadditivity of Lebesgue outer measure, we obtain
\begin{align*}
\la_2(A+\graph h) &\le (2N+1) \la_2(\Phi(A+\graph g))\\
& \le (2N+1) \la_2(A+\graph g) \\
& < (2N+1) \frac{\ep}{2N+1} =\ep. 
\end{align*}
\qed
\end{proof}

\begin{lemma} \label{l2}
If a bounded set $A$ has property ${\rm K}^t$, then for every simple function
$g\colon \nl \to \rr$ and for every $\ep >0$ there exists a simple function
$h\colon \nl \to \rr$ such that $|g-h|<\ep$, and
$\la _2 (A+\graph h) <\ep$.
\end{lemma}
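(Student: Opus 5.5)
The plan is to build $h$ piece by piece on the intervals where $g$ is constant, using Lemma \ref{l1} rescaled to each interval. Let $0=x_0<x_1<\ldots<x_k=1$ be a partition such that $g(x)=c_i$ on $(x_{i-1},x_i)$. We may assume $\ep<1$. The values of $g$ at the partition points $x_i$ form a finite set of points. They contribute to $A+\graph h$ only finitely many translates of $A$, and we cannot control their measure directly. So we first deal with the open intervals, and treat the partition points at the end.

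\textbf{Step 1: a rescaled form of Lemma \ref{l1}.} We show that for every interval $[p,q]$ and every $\de>0$ there is a simple $u\colon[p,q]\to[0,\de]$ with $\la_2(A+\graph u)<\de$. Apply Lemma \ref{l1} with a small $\eta\le\de$ to get a simple $h_0\colon\nl\to[0,\eta]$ with $\la_2(A+\graph h_0)<\eta$. Since $(A+\graph h_0)+([0,1]\times\{0\})$ contains $A+\graph h_0$ shifted horizontally, we could instead use a concatenation argument. The simplest route is this: cover $[p,q]$ by $m=\lceil q-p\rceil$ consecutive unit intervals $[p+j-1,p+j]$ and define $u(x)=h_0(x-p-j+1)$ on the $j$th one. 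Then $A+\graph u$ is contained in the union of $m$ horizontal translates of $A+\graph h_0$, so $\la_2(A+\graph u)<m\eta$. Choosing $\eta=\de/m$ gives the claim, because $q-p\le1$ here and hence $m=1$. Where two pieces meet at a breakpoint we keep only one value, which is harmless since the set only shrinks.

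\textbf{Step 2: pieces on the open intervals.} On each $[x_{i-1},x_i]$ take $u_i\colon[x_{i-1},x_i]\to[0,\ep/2]$ simple with $\la_2(A+\graph u_i)<\ep/(2k)$. Set $h=c_i+u_i$ on $(x_{i-1},x_i)$. Then $|g-h|\le\ep/2<\ep$ there. Also $A+\graph(h|_{(x_{i-1},x_i)})$ is the vertical translate by $c_i$ of a subset of $A+\graph u_i$, so it has measure $<\ep/(2k)$.

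\textbf{Step 3: the partition points (the main obstacle).} At $x_i$ we must set $h(x_i)$ within $\ep$ of $g(x_i)$. The single point $(x_i,h(x_i))$ contributes a full translate of $A$, which need not be null. The fix is to observe that we may assume $\la_2(A)=0$: property ${\rm K}^t$ gives $\la_2(A+\graph g')<\eta$ for simple $g'$, and $A+\graph g'$ contains a translate of $A$, so $\la_2(A)<\eta$ for every $\eta$. Hence each point contributes measure zero. We then put $h(x_i)=g(x_i)$, and by subadditivity
\[
\la_2(A+\graph h)<k\cd\frac{\ep}{2k}+0<\ep .
\]
This $h$ is simple and satisfies $|g-h|<\ep$ everywhere, which proves the lemma.
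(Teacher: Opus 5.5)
Your proof is correct and takes essentially the paper's route: add a small nonnegative simple perturbation from Lemma \ref{l1} to $g$ on each interval where $g$ is constant, use that the vertical shift by $c_i$ preserves measure, and discard the finitely many translates of $A$ at the partition points because $\la_2(A)=0$. The paper simply restricts one function $f$ from Lemma \ref{l1} (taken with bound $\ep/n$) to each subinterval, so your translation/concatenation in Step 1 is unnecessary; on the other hand, your explicit derivation of $\la_2(A)=0$ from property ${\rm K}^t$ justifies a step the paper uses without comment.
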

\proof
Let $0=x_0 <x_1 <\ldots <x_n =1$ be a partition of $\nl$ such that $g$
is constant in the interval $(x_{i-1},x_i )$ for every $i=1\stb n$.
By Lemma \ref{l1}, there is a simple function $f\colon \nl \to [0,\ep ]$ such
that $\la _2 (A+\graph f) <\ep /n$. Let $h=g+f$. Then $h$ is a simple function,
and $|h-g|<\ep$.

The set $A+\graph h$ is covered by the sets $A+(\graph h|_{(x_{i-1},x_i )})$
$(i=1\stb n)$, and by a finite number of translated copies of $A$. Therefore,
we have
$$\la _2 (A+\graph h) \le \sum_{i=1}^n \la _2 \left( A+(\graph h|_{(x_{i-1},x_i )})
\right) .$$
Since $A+(\graph h|_{(x_{i-1},x_i )})$ is a translated copy of
$A+(\graph f|_{(x_{i-1},x_i )}) \su A+\graph f$, we have 
$\la _1 (A+(\graph h|_{(x_{i-1},x_i )} ))<\ep /n$ for every $i=1\stb n$. Therefore,
$\la _2 ( A+\graph h)<\ep$. \hfill $\square$

We say that $K\in \kk$ is {\it $m$-simple} ($m\in \rr$), if there is a partition
$0=x_0 <x_1 <\ldots < x_n =1$ and there are closed segments $s_1 \stb s_n$ of
slope $m$ such that $\pr s_i =[x_{i-1},x_i ]$ for every $i=1\stb n$, and
$K=\bigcup_{i=1}^n s_i$.
\begin{lemma} \label{l3}
For every $m\in \rr$, the family of $m$-simple sets is everywhere dense in
$\kk$.
\end{lemma}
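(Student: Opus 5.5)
Fix $K\in\kk$, $m\in\rr$ and $\de>0$. We will construct an $m$-simple set $K'$ with $d(K,K')<\de$; this proves the lemma, since Hausdorff balls are a base for $\kk$. The construction has two stages: first approximate $K$ by a finite union of short vertical segments, then replace each vertical segment by a zigzag of slope-$m$ segments. One caveat throughout: an $m$-simple set must lie in $\nl^2$ to belong to $\kk$. So all segments must be kept inside the square. We arrange this by first shrinking $K$ slightly toward the centre of the square, which costs only a small Hausdorff distance.

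\emph{Step 1 (discretization).} Choose $n$ with $1/n<\de/4$ and the partition $x_i=i/n$. For each $i$, the set $K\cap([x_{i-1},x_i]\times\rr)$ is nonempty because $\pr K=\nl$. We will approximate it in the Hausdorff metric within $\de/4$ by a finite set of points lying on the vertical line $x=x_{i-1}$ or a nearby one. Concretely, pick finitely many points $p_{i,1}\stb p_{i,r}\in K$ with $\pr p_{i,j}\in[x_{i-1},x_i]$ forming a $\de/4$-net of that slice. The union over $i$ of the nets is a finite set $F$ with $d(F,K)<\de/4$ and with points in every column.

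\emph{Step 2 (connecting with slope-$m$ zigzags).} Subdivide each column $[x_{i-1},x_i]$ into many tiny subintervals of length $\eta$. On them we place consecutive slope-$m$ segments whose vertical positions jump freely: the definition of $m$-simple does not require the segments $s_i$ to connect. Over the column, we let the segments' $y$-values sweep through the heights of the points of the net in that column. Each tiny segment has vertical extent $|m|\eta$, which is small when $\eta$ is small. Thus every segment lies within $\de/4+|m|\eta+\eta$ of some point of $F$. Conversely, every point of $F$ lies within $1/n+|m|\eta$ of some segment, since each column contains a segment at each required height. With these choices the union $K'$ satisfies $d(K',F)<\de/2$, hence $d(K',K)<\de$.

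The main point to check carefully is that $K'$ stays inside $\nl^2$ with $\pr K'=\nl$ exactly. The second condition holds by construction, because the subintervals tile $\nl$. For the first, use the preliminary shrinking: with $K$ contained in $\nl\times[\de/4,1-\de/4]$ (after a $\de/4$ vertical contraction), choose $\eta$ with $|m|\eta<\de/4$. Then every segment starting at a height of a point of $F$ stays in $\nl^2$. Everything else is routine bookkeeping of the Hausdorff distance estimates.
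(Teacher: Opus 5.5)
Your proof is correct and follows essentially the paper's route: approximate $K$ by a finite set, then cover $[0,1]$ by short slope-$m$ segments, each starting at (or near) the height of one of those points, exploiting that the segments of an $m$-simple set need not connect. The differences are cosmetic. You group the points into columns and use at least as many subintervals per column as net points, where the paper perturbs to distinct $x$-coordinates; this also avoids the paper's small slip of leaving the last point $(1,y_n)$ without a nearby segment. You use a preliminary vertical contraction, where the paper chooses the starting heights to keep segments in $[0,1]^2$. Just apply the contraction before taking the net and include its cost $\de/4$ in the Hausdorff budget; the total $\de/4+\de/4+\de/2$ still gives $d(K,K')<\de$.
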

\proof
Let $m\in \rr$, $K\in \kk$ and $\ep >0$ be given. We show that there
exists an $m$-simple set $L$ such that $d(K,L)<3\ep$. Put $\eta =\ep /\max (1,
|m|)$.

Using the fact that $K$ is totally bounded, it is easy to see that there is a
finite set $F\su \nl ^2$ such that $d(K_0 ,F)<\ep$. Moving some points of $F$
slightly if necessary, we may assume that the $x$-coordinates of the
elements of $F$ are distinct. Let $F=\{ (x_0 ,y_1 ) \stb (x_n ,y_n ) \}$, where
$0\le x_0 <x_1 <\ldots <x_n \le 1$. Adding some points of $K$ to $F$ we may also
assume that $x_0 =0$, $x_n =1$, and $x_i -x_{i-1} < \eta$ for every $i=1\stb n$.
For every $i=1\stb n$ let $s_i$ be a segment such that its slope is $m$,
$\pr s_i =[x_{i-1} ,x_i ]$, $s_i \su \nl ^2$, and its left endpoint is $(x_{i-1} ,
z_{i-1})$, where $|z_{i-1} -y_{i-1}|<\ep$. Since $x_i -x_{i-1} < \eta$ and
$\eta \cd |m|\le \ep$, it is clear that there is such a segment for every
$i=1\stb n$.

Putting $L=\bigcup_{i=1}^{n} s_i$, we have $d(L,F)<2\ep$ and
$d(K,L)<3\ep$. \hfill $\square$

{\bf Proof of Theorem \ref{t1}.}
(i)$\akkor$(iv): It is enough to show that for every set $K_0 \in \kk$ and
for every $\ep >0$ there is a $K\in \kk$ such that $d(K,K_0 )<\ep$ and
$\la _2 (A+K) <\ep$. Indeed, if this is true, then the set $\{ K\in \kk \colon
\la _2 (A+K)<1/n\}$ is a dense open subset of $\kk$ for every $n$,
and then $\{ K\in \kk \colon \la _2 (A+K)=0\}$ is a dense $G_\de$ in $\kk$.

By Lemma \ref{l3}, there is an $m$-simple set $L$ with $m=0$ such that
$d(L,K_0 )<\ep /2$. Clearly, there is a simple function $g$ such that
$L=(\graph g)\cup F$, where $F$ is finite. By Lemma \ref{l2}, there is a simple
function $h\colon \nl \to \rr$ such that $|g-h|<\ep /2$, and
$\la _2 (A+\graph h) <\ep$. Put $K= \cl (\graph h) \cup F$. Then $K\in \kk$,
$d(K,K_0 )<\ep$, and $\la _2 (A+K)=\la _2 (A+\graph h) <
\ep$. 

The implications (iv)$\akkor$(iii) and (iii)$\akkor$(ii) are obvious.

(ii)$\akkor$(i): Let $\ep >0$ be given. If $g\colon \nl \to \rr$ is such
that $\la _2 (A+\graph g)<\ep$, then there is an open set $G$ such that
$A+\graph g\su G$ and $\la _2 (G)<\ep$. Then, for every $x\in \nl$ we have
$A+(x,g(x))\su G$. Since $A$ is compact and $G$ is open, it follows that
there exists a positive number $\de _x$ such that
$A+(t,g(x))\su G$ for every $t\in (x-\de _x ,x+\de _x )$.

By Cousin's lemma \cite[Theorem 1.3, p. 9]{BBT}, there exists a partition
$0=t_0 <t_1 <\ldots <t_n =1$, and for every $i=1\stb n$ there is a point
$x_i \in [t_{i-1},t_i ]$ such that $(t_{i-1},t_i )\su (x_i -\de _{x_i} ,
x_i +\de _{x_i} )$. Then we have $A+(t,g(x_i ))\su G$ for every $t\in
(t_{i-1},t_i )$ $(i=1\stb n)$.

Thus $A+\graph h\su G$, where $h(x)=g(x_i )$ $(x\in (t_{i-1},t_i )$,
$i=1\stb n)$. \hfill $\square$

\section{Proof of Theorem \ref{t3}}\label{s3}
We denote by $\la _1$ the linear measure ($1$ dimensional Hausdorff measure)
on $\sik$.
\begin{lemma} \label{l4}
Let $\ga \colon [a,b]\to \sik$ be a curve with absolutely continuous coordinate
functions. Suppose that for every $\ep >0$ there are pairwise disjoint
open intervals $J_1 ,J_2 ,\ldots \su [a,b]$ and real numbers $m_i ,c_i$
such that
\begin{enumerate}[{\rm (i)}]
\item $\sum_{i=1}^\infty |J_i |=b-a$,
\item for every segment $I\su \sik$ of slope $m_i$ we have
$\la _2 (\ga (\cl J_i )+I)\le c_i \cd |I|$ $(i=1,2,\ldots )$, and
\item $\sum_{i=1}^\infty c_i \cd \sqrt{1+m_i^2} <\ep$.
\end{enumerate}
Then $\ga ([a,b] )$ is graph-null. 
\end{lemma}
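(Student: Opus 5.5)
The plan is to verify condition (ii) of Theorem~\ref{t1} for the compact set $A=\ga ([a,b])$, and then conclude that $A$ is graph-null by that theorem. Rather than building the function $g$ by hand, I would use a Baire category argument in the compact metric space $\kk$, in the spirit of the proof of (i)$\akkor$(iv).

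Fix $\ep >0$ and take $J_i ,m_i ,c_i$ as in the hypotheses. Put $C_i =\ga (\cl J_i )$, which is compact, and $Z=[a,b]\se \bigcup_i J_i$, which is a null set by (i).

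\textbf{Step 1: the arcs $C_i$.} If $K$ is $m_i$-simple, $K=\bigcup_{j} s_j$ with segments of slope $m_i$ and $\sum_j \la _1(\pr s_j) =1$, then (ii) gives
$$\la _2 (C_i +K)\le \sum_j c_i |s_j| =c_i \sqrt{1+m_i^2}.$$
For a compact set $C$, the map $K\mapsto \la _2 (C+K)$ is upper semicontinuous on $\kk$. Indeed, $C+K$ is compact, so it lies in an open set $G$ of measure close to $\la _2(C+K)$, and $C+K'\su G$ for every $K'$ near $K$. Hence
$$U_i =\{ K\in\kk \colon \la _2 (C_i +K)< c_i\sqrt{1+m_i^2}+\ep 2^{-i}\}$$
is open, and it is dense by Lemma~\ref{l3}.

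\textbf{Step 2: the exceptional set $\ga (Z)$.} Since $\ga$ has absolutely continuous coordinates, for each $n$ there is an open set $G_n\sp Z$ whose components $(u,v)$ satisfy $\sum \int_u^v |\ga '| <\ep 2^{-n}$. So $\ga (Z)$ is covered by countably many compact arcs $D_k =\ga ([u_k ,v_k ])$ with $\sum_k \la_1(D_k)<\ep$. For a segment $I$ of slope $0$, the set $D+I$ lies in $\bigcup_{t\in I}(D+t)$, and its area is at most $|I|\cd\la _1(\text{projection of } D \text{ to the } y\text{-axis})\le |I|\la _1 (D)$. Thus $0$-simple $K$ give $\la _2 (D_k+K)\le \la _1(D_k)$. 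As in Step~1,
$$V_k=\{K\colon \la _2(D_k+K)<\la _1(D_k)+\ep 2^{-k}\}$$
is open and dense.

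\textbf{Step 3: conclusion.} The intersection $\bigcap_i U_i\cap\bigcap_k V_k$ is comeager in $\kk$, hence nonempty. For $K$ in it,
$$\la _2 (A+K)\le\sum_i \la _2(C_i+K)+\sum_k\la _2(D_k+K)<\ep+\ep+\ep+\ep =4\ep ,$$
using (iii) for the first sum. Since $\pr K=\nl$, $K$ contains the graph of some $g\colon\nl\to\rr$, and so $\la _2(A+\graph g)<4\ep$. Because $\ep$ was arbitrary, this is condition (ii) of Theorem~\ref{t1}, and Theorem~\ref{t1} gives that $A$ is graph-null.

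I expect the main obstacle to be the exceptional set $\ga (Z)$. One has to make sure that the absolute-continuity estimate really produces compact pieces of small total length; this is the standard fact that an AC function maps null sets to null sets, applied to the arc-length parametrization. One must also check that the area bound for slope-$0$ segments holds uniformly. A second delicate point is the upper semicontinuity used to get openness; this requires compactness of each individual piece $C_i$ and $D_k$, which holds because each is the continuous image of a closed interval.
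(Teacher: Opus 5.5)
Your overall architecture is sound and is essentially the paper's. The paper builds a convergent sequence of $m_n$-simple sets $K_n$ inside shrinking neighbourhoods, which is a hand-made version of your Baire category argument. It rests on the same two facts: density of $m$-simple sets (Lemma~\ref{l3}) and upper semicontinuity of $K\mapsto \la_2(C+K)$ for compact $C$. It also finishes through condition (ii) of Theorem~\ref{t1}, as you do. Your Steps 1 and 3 are correct.

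Step 2, however, contains a false inequality. The bound $\la_2(D+I)\le |I|\cd \la_1(\text{projection of } D \text{ onto the } y\text{-axis})$ fails as soon as the arc meets some horizontal line more than once. For example, let $D$ be the arc made of the segments from $(0,1)$ to $(0,0)$, from $(0,0)$ to $(2,0)$ and from $(2,0)$ to $(2,1)$, and let $I=[0,1]\times\{0\}$. Then $D+I$ contains $[0,1]\times[0,1]$ and $[2,3]\times[0,1]$, so its area is at least $2$, whereas $|I|$ times the length of the projection is $1$.

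The bound you actually need, $\la_2(D+I)\le |I|\cd\la_1(D)$, is true, but proving it requires counting multiplicity. On each horizontal line $e_c$ one has $\la_1((D+I)\cap e_c)\le |I|\cd \#(D\cap e_c)$. Integrating in $c$ and using Banach's indicatrix theorem gives $\la_2(\ga([u,v])+I)\le |I|\cd V(\ga_2;[u,v])\le |I|\int_u^v|\ga'|$; this is exactly Lemma~\ref{l5}(i). Alternatively, Eilenberg's inequality gives the bound in terms of $\la_1(D)$ directly. If you put $\int_{u_k}^{v_k}|\ga'|$ in place of $\la_1(D_k)$ in the definition of $V_k$, those sets are open and dense and the rest of your argument goes through.

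There is also a simpler repair, and it is how the paper handles the exceptional set. Since the $J_i$ are open, $Z$ is closed, so $\ga(Z)$ is a single compact set with $\la_1(\ga(Z))=0$. Its projection onto the $y$-axis is therefore null, so $\ga(Z)+K$ is null for every $0$-simple $K$. The single open dense set $\{K\in\kk\colon \la_2(\ga(Z)+K)<\ep\}$ then replaces your whole covering by arcs.

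Your Baire formulation also gives a small bonus. If you intersect the open dense sets over $\ep=1/n$ as well, you get directly that $\ga([a,b])$ is typically graph-null, with no appeal to Theorem~\ref{t1}.
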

\proof
By Theorem \ref{t1}, it is enough to show that for every $\ep >0$
there is a function $g\colon \nl \to \rr$ such that $\la _2 (\ga ([a,b] ) +
\graph g) <\ep$.

Let $\ep >0$ be given, and let $J_i ,m_i ,c_i$ satisfy conditions (i)-(iii).
Put $F=[a,b]\se \bigcup_{i=1}^\infty J_i$, then $\la _1 (F)=0$ by (i). Since the
coordinate functions of $\ga$  are absolutely continuous, it follows that
$\la _1 (\ga (F)) =0$.

Let $K_1$ be an arbitrary $m_1$-simple set. The total length of the segments
that constitute $K_1$ is $\sqrt{1+m_1^2}$. Then, by (ii), we have
$\la _2 (\ga (\cl J_1 )+K_1 )\le c_1 \cd \sqrt{1+m_1^2}$. We also have $\la _2 (\ga (F)+K_1 )=0$, as $\la _1 (\ga (F))=0$.

Since $\ga (\cl J_1 )+K_1$ and $\ga (F)+K_1$ are both compact, there is an
$\eta _1$ such that  $\la _2 (\ga (\cl J_1 )+K)\le c_1 \cd \sqrt{1+m_1^2} +
\ep /2$ and $\la _2 (\ga (F)+K)<\ep$ whenever $K\in \kk$ and $d(K,K_1 )<
\eta _1$.

Applying Lemma \ref{l3}, we can find an $m_2$-simple set $K_2 \in \kk$ such
that $d(K_2 ,K_1 )<\min (\ep /4, \eta _1 /2)$.
The total length of the segments that constitute $K_2$ is $\sqrt{1+m_2^2}$.
Thus $\la _2 (\ga (\cl J_2 )+K_2 )\le c_2 \cd \sqrt{1+m_2^2}$.
Since $\ga (\cl J_2 )+K_2$ is compact, there is an $\eta _2 \in (0,\eta _1 /2)$
such that $\la _2 (\ga (\cl J_2 )+K)\le c_2 \cd \sqrt{1+m_2^2} +\ep /4$ whenever
$K\in \kk$ and $d(K,K_2 )<\eta _2$.

Continuing the process, we find the sets $K_n$ for every $n=1,2,\ldots$.
It is easy to check that the sets $K_n$ converge to a set $K$ in the
metric space $\kk$, and that
$$\la _2 (\ga ([a,b]) +K)<\sum_{i=1}^\infty \left( c_i \cd \sqrt{1+m_i^2} +
(\ep /2^i )\right) + \la _2 (\ga (F)+K )<3\ep$$
by (iii). Since $K\in \kk$, $K$ contains the graph of a function 
$g\colon \nl \to \rr$, and then $\la _2 (\ga ([a,b] ) + \graph g) <
3\ep$. \hfill $\square$ 

The total variation of a function $f\colon [a,b] \to \rr$ is denoted by
$V(f;[a,b])$.

\begin{lemma} \label{l5}
Let $\ga =(\ga _1 ,\ga _2 )\colon [a,b] \to \sik$ be a continuous curve.
\begin{enumerate}[{\rm (i)}]
\item If $s$ is a horizontal segment, then $\la _2 (\ga ([a,b])+s)\le
V(\ga _2 ; [a,b])\cd |s|$.
\item If $s$ is a segment of slope $m$, then
$$\la _2 (\ga ([a,b])+s)\le \frac{|s|}{\sqrt{1+m^2}} \cd V(m\ga _1 -\ga _2 ;
[a,b]).$$
\end{enumerate}
\end{lemma}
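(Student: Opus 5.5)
Part (i) comes from Fubini's theorem, using the fact that horizontal sections of $\ga([a,b])+s$ are controlled by the level sets of $\ga_2$. Part (ii) then follows from (i) by a linear change of coordinates that makes $s$ horizontal.

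\emph{Part (i).} Let $s=[p,p+\ell]\times\{q\}$, so $|s|=\ell$. The set $E=\ga([a,b])+s$ is compact, hence measurable. For $y\in\rr$ its horizontal section is
$$E^y=\bigcup\{\,[\ga_1(t)+p,\ \ga_1(t)+p+\ell]\colon \ga_2(t)=y-q\,\}.$$
This is a union of intervals of length $\ell$, indexed by the $t$ with $\ga_2(t)=y-q$. Let $N(z)$ be the number of $t\in[a,b]$ with $\ga_2(t)=z$. Then $\la_1(E^y)\le \ell\cd N(y-q)$, with the convention that the bound is trivial when $N=\infty$. By Fubini,
$$\la_2(E)\le \ell\int_\rr N(z)\,dz .$$
Banach's indicatrix theorem gives $\int N(z)\,dz=V(\ga_2;[a,b])$ for a continuous $\ga_2$. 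This yields (i). If $V=\infty$ there is nothing to prove, and if $V<\infty$ then $N<\infty$ a.e., so the estimate is legitimate. The only care needed is the measurability of $N$, which is part of Banach's theorem. If one prefers to avoid citing it, one can instead bound $\la_1(E^y)\le \ell\cd\#\{$components of $\ga_2^{-1}(y-q)\}$ and use the elementary crossing-count argument on partitions.

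\emph{Part (ii).} Write $s=\{(u,v)+\tau(1,m)\colon 0\le\tau\le \ell'\}$, where $\ell'=|s|/\sqrt{1+m^2}$ is the length of the $x$-projection of $s$. Apply the linear shear $T(x,y)=(x,\,mx-y)$. It has determinant $-1$, so it preserves $\la_2$ and commutes with Minkowski sums. Under $T$, the direction $(1,m)$ goes to $(1,0)$, so $T(s)$ is a horizontal segment of length $\ell'$. Also $T(\ga([a,b]))$ is the image of the continuous curve $(\ga_1,\,m\ga_1-\ga_2)$. Applying (i) to this curve and to $T(s)$ gives
$$\la_2(\ga([a,b])+s)=\la_2\bigl(T(\ga([a,b]))+T(s)\bigr)\le \frac{|s|}{\sqrt{1+m^2}}\,V(m\ga_1-\ga_2;[a,b]),$$
which is (ii).

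The main obstacle is the first step of (i): justifying $\int N = V$, or at least the inequality $\int N\le V$. This is the one place where more than bookkeeping is needed, and I would simply invoke Banach's indicatrix theorem for it.
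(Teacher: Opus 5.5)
Your proof is correct and follows the paper's approach. Part (i) is the same Fubini-plus-Banach-indicatrix argument. For (ii) you use the measure-preserving shear $(x,y)\mapsto(x,mx-y)$ where the paper rotates by $\arctan m$, so the factor $1/\sqrt{1+m^2}$ comes from the shortened horizontal segment rather than from rescaling the second coordinate, which is an immaterial difference. One caveat on your optional aside only: the pointwise bound $\la_1(E^y)\le \ell\cdot\#\{\text{components of }\ga_2^{-1}(y-q)\}$ fails at levels where $\ga_2$ is constant on a nondegenerate interval (e.g.\ $\ga(t)=(t,0)$ on $[0,1]$). There are only countably many such levels, so the integrated bound survives, and your main argument does not depend on it.
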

\proof
(i) We may assume that the left endpoint of $s$ is the origin. Let $e_c$ denote
the horizontal line $y=c$, and let $k(c)$ denote the number
of points of the curve $\ga$ on $e_c$. It is clear that the one dimensional
measure of $(\ga ([a,b])+s) \cap e_c$ is at most $k(c)\cd |s|$. Therefore, by
Fubini's theorem, we have $\la _2 (\ga ([a,b])+s)\le |s|\cd \int_J k(y)\, dy$,
where $J$ is the range of $\ga _2$. Since $k(y)=|\ga _2 \am (\{ y\} )|$,
we have, by Banach's theorem \cite[Theorem 6.4, Chapter IX, p. 280]{Saks},
$\int_J k(y)\, dy =V(\ga _2 ; [a,b])$.

(ii) Let $m\in \rr$ be given, and put $\al =\arctan m$. Rotating the curve
$\ga$ by $\al$ in the negative direction we obtain the curve
$g =(g_1 ,g_2 )\colon [a,b] \to \sik$. Clearly, $\la _2 (\ga ([a,b])+s)$
equals $\la _2 (g([a,b])+t)$, where $t$ is a horizontal segment of length $|s|$.
Since $g_2 =-\sin \al \cd \ga _1 +\cos \al \cd \ga _2 =-(1/\sqrt{1+m^2}) \cd
(m\ga _1 -\ga _2 )$, the statement follows from (i). \hfill $\square$ 

{\bf Proof of Theorem \ref{t3}.}
Put $\ga '_1 =f$ and $\ga '_2 =g$. Then $f,g\in L_1 [a,b]$, and a.e. point
$x\in (a,b)$ is a Lebesgue point of $f$ and of $g$.
Let $E$ denote the set of points $x\in (a,b)$ such that $x$ is a Lebesgue
point of $f$ and of $g$, satisfying also $f(x)=\ga '_1 (x)\ne 0$.
By assumption, we have $\la _1 ([a,b] \se E)=0$.

We prove that $\ga$ satisfies the conditions of Lemma \ref{l4}. Let $\ep >0$
be given, and let $x_0 \in E$ be arbitrary. Since $x_0$ is a Lebesgue point
of both $f$ and $g$, and $f(x_0 )\ne 0$, it follows that
\begin{equation} \label{e2}
\begin{split}
&\int_J |g(x)-g(x_0 )| \, dx <\frac{\ep}{2} \cd |J| \quad \text{and} \\
&\int_J |f(x)-f(x_0 )| \, dx < \frac{|f(x_0 )|}{1+|g(x_0 )|} \cd 
\frac{\ep}{2} \cd |J|
\end{split}
\end{equation}
for every short enough interval $J$ containing $x_0$. Then, by the Vitali
covering theorem, there are pairwise disjoint open subintervals
$J_1 ,J_2 ,\ldots$ of $[a,b]$ covering a.e. point of $E$, and there are points
$x_i \in J_i$ such that \eqref{e2} holds with $J=J_i$ and $x_0 =x_i$.
Let $m_i =g(x_i )/f(x_i )$ and
$$c_i =\frac{1}{\sqrt{1+m_i^2}} \cd 
\int_{J_i} \left| \frac{g(x_i )}{f(x_i )} f(x)-g(x) \right| \, dx$$
for every $i$. We prove that the intervals $J_i$ and the numbers $m_i ,c_i$
satisfy the conditions of Lemma \ref{l1}. Condition (i) is obviously
satisfied, and (ii) follows from Lemma \ref{l5}. As for (iii), note that
\begin{align*}
\int_{J_i}  \left| \frac{g(x_i )}{f(x_i )} f(x)-g(x) \right| \, dx  \le & 
 \frac{|g(x_i )|}{|f(x_i )|} \cd \int_{J_i} | f(x)-f(x_i )| \, dx +\\
&+ \int_{J_i} |g(x)-g(x_i )| \, dx <\ep \cd |J_i |
\end{align*}
by \eqref{e2}, and thus (iii) follows. \hfill $\square$ 

\section{Proof of Theorem \ref{t5}}\label{s4}
In this section we shall denote by $|A|$ the Lebesgue measure of the set
$A\su \sik$; that is, $|A|=\la _2 (A)$. Note that $|A|$ also denotes the
cardinality of the finite set $A$. It will be clear from the context 
in which sense the notation is used. Our aim is to prove Theorem \ref{t5}.
\begin{lemma} \label{l6}
If $0<\ep <1$ and $n>n_0 (\ep )$, then there is a sequence $\eta =
(\eta _1 \stb \eta _n )\in \{ -1,1\} ^n$ such that for every $k=1\stb n-1$ and
$0\le u<v\le n-k$ the cardinality of each of the sets
\begin{equation}\label{e3}
\begin{split}  
U_1 &=\{ u+1\le i \le v \colon \eta _i =\eta _{i+k} =1\} ,\\
U_2 &=\{ u+1\le i \le v \colon \eta _i =\eta _{i+k} =-1\},\\
U_3 &=\{ u+1\le i \le v \colon \eta _i =1, \ \eta_{i+k} =-1\},\\
U_4 &=\{ u+1\le i \le v \colon \eta _i =-1, \ \eta _{i+k} =1\}
\end{split}
\end{equation}
is less than $\tfrac{v-u}{4} +\ep n$.
\end{lemma}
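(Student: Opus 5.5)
We will choose $\eta$ at random and use a union bound. Let $\eta _1 \stb \eta _n$ be independent random signs, each equal to $\pm 1$ with probability $1/2$. Fix $k\in \{1\stb n-1\}$, $0\le u<v\le n-k$, and one of the four patterns $(a,b)\in \{ -1,1\} ^2$. Put $X_i =1$ if $\eta _i =a$ and $\eta _{i+k} =b$, and $X_i =0$ otherwise. Then $|U_j |=\sum_{i=u+1}^{v} X_i$, and ${\bf E} X_i =1/4$ for each $i$, so ${\bf E}|U_j|=(v-u)/4$. It therefore suffices to show that
$$\Pr\Big(\sum_{i=u+1}^v X_i \ge \tfrac{v-u}{4}+\ep n\Big)$$
is so small that, summed over the fewer than $4n^3$ choices of $(k,u,v,j)$, the total is below $1$ once $n>n_0(\ep)$.

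The variables $X_i$ are not independent, since $X_i$ and $X_{i+k}$ share $\eta _{i+k}$. The way around this is to split the index set $\{u+1\stb v\}$ into two classes so that the $X_i$ within each class are independent. Partition the integers into consecutive blocks of length $k$, $[2tk,(2t+1)k)$ and $[(2t+1)k,(2t+2)k)$, and let class $A$ consist of the indices in even-numbered blocks and class $B$ of those in odd-numbered blocks. If $i,i'$ lie in the same class and $i\ne i'$, then the sets $\{i,i+k\}$ and $\{i',i'+k\}$ are disjoint. Indeed, $i'=i+k$ would force $i$ and $i'$ into adjacent blocks, which have different parity. Hence the $X_i$ with $i$ in a fixed class are functions of disjoint sets of independent signs, so they are independent. (Alternatively one can split by residues of $i$ mod $2k$, giving independent arithmetic progressions, but two classes are enough.)

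Within each class, Hoeffding's inequality gives
$$\Pr\Big(\sum_{i\in A\cap[u+1,v]} (X_i -\tfrac14)\ge \tfrac{\ep n}{2}\Big)\le \exp\big(-2(\ep n/2)^2/n\big)=e^{-\ep ^2 n/2},$$
where we used that the class contains at most $n$ terms, and the same bound holds for $B$. If the total sum exceeds its mean by $\ep n$, then one of the two classes exceeds its mean by $\ep n/2$. So the probability of a single bad event is at most $2e^{-\ep^2 n/2}$, and the union bound gives total failure probability at most $8n^3 e^{-\ep ^2 n/2}$. This is less than $1$ for $n>n_0(\ep)$, so some $\eta$ satisfies all the required inequalities; the strict inequality needed in the statement follows by running the same argument with $\ep /2$ in place of $\ep$.

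The main obstacle is the dependence among the $X_i$, and it is handled by the parity-of-blocks decomposition above. Everything else is a routine Chernoff–Hoeffding estimate combined with a polynomial-size union bound.
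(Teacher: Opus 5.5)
Your proof is correct, and it gets concentration differently from the paper. The paper never splits the index set. It uses $4\cdot\mathbf{1}[\eta_i=a,\ \eta_{i+k}=b]=(1+a\eta_i)(1+b\eta_{i+k})$, so each $|U_j|$ equals $(v-u)/4$ plus a signed combination of $\sum\eta_i$, $\sum\eta_{i+k}$ and $\sum\eta_i\eta_{i+k}$ over $(u,v]$. It then controls these three sums by computing fourth and sixth moments (the mixed moments vanish unless the indices pair up), followed by Markov's inequality. Those tails are only polynomial, $O(\ep^{-4}n^{-2})$ and $O(\ep^{-6}n^{-3})$, so the paper cannot afford a union bound over all $O(n^3)$ triples $(k,u,v)$. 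Instead it bounds only prefix sums, over $u$ and over $(k,u)$, at level $\ep n/2$, and recovers arbitrary intervals by differencing; this yields $n_0(\ep)\approx c/\ep^6$.

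You deal with the dependence among the $X_i$ explicitly through the parity-of-blocks split, which is valid: within a class the pairs $\{i,i+k\}$ are pairwise disjoint, so those $X_i$ are mutually independent. Hoeffding then gives exponentially small tails. As a result the crude $O(n^3)$ union bound suffices, all four patterns are handled uniformly, and you get the much better $n_0(\ep)=O(\ep^{-2}\log(1/\ep))$. The price is importing Hoeffding's inequality in place of the paper's self-contained moment computation.

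Your closing remark about rerunning the argument with $\ep/2$ is unnecessary. Your bad events are defined with $\ge$, so their complements already give the strict inequality.
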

\proof 
Let $\mu$ be the normalized counting measure on $X= \{ -1,1\} ^n$; that is, let
$\mu (A)=|A|/2^n$ for every $A\su X$. Then $(X,\mu )$ is a probability space
on which the coordinate functions $x_1 \stb x_n$ are independent.
Note that $\int_X \prod_{j=1}^t x_{i_j} \, d\mu =0$ whenever one of the indices
$i_1 \stb i_t$ is distinct from the others (even if the others are not
necessarily distinct).

By $c_1 ,c_2 ,\ldots$ we denote absolute positive constants.
For every $1\le u\le n$ we have
\begin{equation} \label{e4a}
\int_X \left( \sum_{i=1}^{u} x_{i} \right) ^4 \, d\mu =
\sum_{i_1 \stb i_4 =1}^{u} \int_X \prod_{j=1}^4 x_{i_j}  \, d\mu .
\end{equation} 
By the remark above, we have $\int_X \prod_{j=1}^4 x_{i_j} \, d\mu =0$
except when, after a suitable permutation of $i_1 \stb i_4$, we have $i_1 =i_2$
and $i_3 =i_4$. Thus \eqref{e4a} gives
$$\int_X \left( \sum_{i=1}^{u} x_{i} \right) ^4 \, d\mu <c_1 \cd n^2 .$$
Therefore,
$$\mu \left( \left\{ x\in X\colon |x_1 +\ldots +x_u |
\ge (\ep /2) n \right \}\right) <c_2 \cd n^2 /(\ep ^4 n^4 )=c_2 /(\ep ^4 n^2 ).$$
Consequently, the measure of the set of elements $x$ such that
$|x_1 +\ldots +x_u | \ge (\ep /2) n$ holds for at least one $1\le u\le n$ is
less than $c_2 /(\ep ^4 n )$. Then the measure of the set of elements $x$ such
that $|x_{u+1} +\ldots +x_v | \ge \ep n$ holds for at least one pair 
$(u,v)$ is less than $2c_2 /(\ep ^4 n )$.  

We have, for every $1\le u\le n-k$,
\begin{equation} \label{e4}
\int_X \left( \sum_{i=1}^u x_{i} x_{i +k} \right) ^6 \, d\mu =
\sum_{i_1 \stb i_6 =1}^u \int_X \prod_{j=1}^6 x_{i_j} x_{i_j +k} \, d\mu .
\end{equation} 
Again, we have $\int_X \prod_{j=1}^6 x_{i_j} x_{i_j +k} \, d\mu =0$
except when, after a suitable permutation of $i_1 \stb i_6$, we have $i_1 =i_2$,
$i_3 =i_4$ and $i_5 =i_6$. Thus \eqref{e4} gives
$$\int_X \left( \sum_{i=1}^u x_{i} x_{i +k} \right) ^6 \, d\mu <c_3 \cd n^3 .$$
Therefore,
$$\mu \left( \left\{ x\in X\colon \left| \sum_{i=1}^{u} x_{i} x_{i +k} \right|
\ge (\ep /2) n \right \}\right) <c_4 \cd n^3 /(\ep ^6 n^6 )=c_4 /(\ep ^6 n^3 ) .$$
Then the measure of the set of elements $x$ such that
$|\sum_{i=1}^u x_{i} x_{i +k}|\ge (\ep /2) n$ holds for at least one $k$ and 
$u$ is less than $c_4 /(\ep ^6 n)$. Thus the measure of the set of elements $x$
such that $|\sum_{i=u+1}^v x_{i} x_{i +k}|\ge \ep n$ holds for at least one $k$
and a pair $(u,v)$ is less than $2c_4 /(\ep ^6 n)$. 

If $n>c_5 /\ep ^6$, then $2c_2 /(\ep ^4 n ) +2c_4 /(\ep ^6 n)<1$.
Therefore,
there exists a sequence $\eta \in X$ such that for every $1\le k\le n-1$
and $0\le u<v\le n-k$ we have $|\eta_{u+1} +\ldots +\eta_v | < \ep n$,
$|\eta_{u+1+k} +\ldots +\eta_{v+k} | <\ep n$, and
$|\sum_{i=u+1}^v \eta _i \eta _{i+k} |<\ep n$. Then
$$4\cd |U_1 |=\sum_{i=u+1}^v (\eta _i +1)(\eta _{i+k} +1 )< 4\ep n+ (v-u),$$
and $|U_1 | <((v-u)/4) +\ep n$. For $j=2,3,4$ we obtain 
$|U_j | <((v-u)/4) +\ep n$ similarly. \hfill $\square$

If $I$ is a bounded closed interval, $I=[c,d]$, then we denote
$$I^{-1}=[c,(c+d)/2], \qquad I^{1}=[(c+d)/2 ,d].$$
If $R=[a,b]\times [c,d]$, $n$ is a positive integer and $\al \in \{ -1,1\} ^n$,
$\al = (\al _1 \stb \al _n )$, then we denote $R^\al =\bigcup_{i=1}^n ([t_{i-1},
t_i ]\times [c,d]^{\al _i} )$, where $t_i =a+\tfrac{i}{n} \cd (b-a)$
$(i=0\stb n)$.
\begin{lemma} \label{l7}
Let $\ep >0$, $\al , \be \in \{ -1,1\} ^n$, $\al = (\al _1 \stb \al _n )$,
$\be = (\be _1 \stb \be _n )$ be given such that for every $k=0\stb n-1$, the
cardinality of each of the sets
\begin{equation}\label{e4b}
\begin{split}  
V_1 &=\{ 1\le i\le n-k \colon \al _{i+k} =\be _i =1\} ,\\
V_2 &=\{ 1\le i\le n-k \colon \al _{i+k} =\be _i =-1\} ,\\
V_3 &=\{ 1\le i\le n-k \colon \al _{i+k} =1, \ \be _i =-1\} ,\\
V_4 &=\{ 1\le i\le n-k \colon \al _{i+k} =-1, \ \be_{i+k} =1\}
\end{split}
\end{equation}
is less than $\tfrac{n-k}{4} +\ep n$. If $R$ is a rectangle with sides parallel
to the axes, then 
\begin{equation} \label{e5}
  |R^\al \cap (R^\be +(x,y)) |< \tfrac{1}{4}\cd |R\cap (R+(x,y))|+\ep \cd |R|
\end{equation}
for every $x\ge 0$ and $y\in \rr$.
g\end{lemma}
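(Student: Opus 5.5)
We will reduce everything to a direct count over columns, using the hypothesis on $V_1\stb V_4$ once for the shift $k$ and once for $k+1$. (In $V_4$ we read $\be _{i+k}$ as the evident misprint for $\be _i$, so that the four sets correspond to the four possible pairs $(\al _{i+k},\be _i)$.) Applying an affine map $(x,y)\mapsto (px+q,ry+s)$ with $p,r>0$ multiplies every area in \eqref{e5} by the same factor and commutes with the constructions $R\mapsto R^\al$. We may therefore assume $R=\nl ^2$, so that $t_i=i/n$ and $|R|=1$. If $x\ge 1$, the left-hand side of \eqref{e5} vanishes and there is nothing to prove. So let $0\le x<1$ and write $x=(k+\theta )/n$ with $k\in \{0\stb n-1\}$ and $0\le \theta <1$.

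First we decompose the intersection into products. The $i$-th column of $R^\be +(x,y)$ is $[t_{i-1}+x,t_i+x]\times (\nl ^{\be _i}+y)$. Up to null sets, it meets only columns $i+k$ and $i+k+1$ of $R^\al$, with horizontal overlaps of lengths $(1-\theta )/n$ and $\theta /n$. Vertically, the overlap of $\nl ^{a}$ with $\nl ^{b}+y$ has length $L(a,b)$, which depends only on $a,b\in \{-1,1\}$ and $y$. Since the two halves partition $\nl$, we have
$$\sum_{a,b\in \{-1,1\}} L(a,b)=|\nl \cap (\nl +y)|=:\ell \le 1 .$$
Hence
$$|R^\al \cap (R^\be +(x,y))| =\frac{1-\theta}{n}\sum_{i=1}^{n-k} L(\al _{i+k},\be _i)+\frac{\theta}{n}\sum_{i=1}^{n-k-1} L(\al _{i+k+1},\be _i).$$
The second sum is empty when $k=n-1$.

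Next we group each sum by the type $(a,b)$ of the pair. The number of indices of type $(a,b)$ in the first sum is the cardinality of one of the sets $V_j$ for the shift $k$, hence less than $\tfrac{n-k}{4}+\ep n$. Therefore the first sum is at most $\bigl(\tfrac{n-k}{4}+\ep n\bigr)\ell$, and the inequality is strict when $\ell >0$. The same argument with the shift $k+1$ bounds the second sum by $\bigl(\tfrac{n-k-1}{4}+\ep n\bigr)\ell$. Adding the two estimates gives
$$|R^\al \cap (R^\be +(x,y))|\le \frac{\ell}{4}\cd \frac{(1-\theta)(n-k)+\theta (n-k-1)}{n}+\ep \ell =\frac{(1-x)\ell}{4}+\ep \ell .$$
Since $|R\cap (R+(x,y))|=(1-x)\ell$ and $\ell \le 1=|R|$, this is \eqref{e5}.

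The only delicate point is strictness when $\ell =0$. In that case the left-hand side is $0<\ep |R|$, so \eqref{e5} still holds.

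The main thing to get right is the bookkeeping in the decomposition. It uses three facts: a horizontal shift by a non-integer number of column widths splits each column of $R^\be +(x,y)$ between exactly two neighbouring columns of $R^\al$, which is why the hypothesis is needed for both $k$ and $k+1$; the vertical overlap depends only on the pair of signs; and the four overlap lengths sum to $\ell$.
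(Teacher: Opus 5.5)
Your proof is correct and follows the same route as the paper. You reduce to $R=\nl^2$, split the intersection column by column, and bound the resulting sums by the counts $|V_1|\stb|V_4|$ at the shifts $k$ and $k+1$; your explicit weights $1-\theta$ and $\theta$ are exactly the paper's observation that the left side is linear in $x$ on $[k/n,(k+1)/n]$. The one streamlining is your identity $\sum_{a,b}L(a,b)=|\nl\cap(\nl+y)|$, which handles all $y$ at once and replaces the paper's reduction to $y\ge 0$ and its case split $0\le y\le 1/2$ versus $1/2\le y<1$.
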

\proof  Applying a suitable affine transformation we may assume that $R=\nl ^2$.
Let $F(x,y)=|R^\al \cap (R^\be +(x,y))|$. In order to prove \eqref{e5} we may
assume $y\ge 0$. Indeed, the case of $y<0$ can be reduced to the case
of $y>0$ if we replace $y$ by $-y$, $\al$ by $-\al$ and $\be$ by $-\be$. We
may also assume that $x,y<1$, since otherwise $F(x,y)=0$. Summing up: we
assume $0\le x<1$ and $0\le y<1$.

There is an integer $1\le k\le n$ such that $(k-1)/n\le x \le k/n$.
It is easy to check that for every fixed $y$ the function $x\mapsto F(x,y)$
is linear in the interval $[(k-1)/n,k/n]$. The same is true for the right hand
side of \eqref{e5}. Therefore, the difference of the two sides of \eqref{e5}
is linear in $[(k-1)/n,k/n]$, and thus it is enough to prove \eqref{e5} in
the cases when $x=k/n$ $(k=0\stb n-1)$.

Let $0\le k\le n-1$ be given. Then $0\le y <1$ implies
$$R\cap \left( R + \left( \tfrac{k}{n},y \right) \right) = \left[
  \tfrac{k}{n},1  \right] \times [y,1]$$
and
\begin{equation}\label{e5a}
\left| R\cap \left( R + \left( \tfrac{k}{n},y \right) \right) \right|
=\tfrac{n-k}{n} \cd (1-y).
\end{equation}
We have
\begin{equation}\label{e5b}
R^\al \cap \left( R^\be + \left( \tfrac{k}{n},y \right) \right) =
\bigcup_{i=1}^{n-k} \left( \left[ \tfrac{i+k-1}{n},\tfrac{i+k}{n}\right] 
\times J_i \right) ,
\end{equation}
where
\begin{equation*}
J_i = \nl ^{\al _{i+k}} \cap \left( \nl ^{\be _{i}} +y\right) .
\end{equation*}
By \eqref{e5a} and \eqref{e5b} it is enough to show that
\begin{equation*} 
\sum_{i=1}^{n-k} |J_i |< \frac{n-k}{4} \cd (1-y) + \ep n. 
\end{equation*}   
Now we have
\begin{align*}
  \al _{i+k} =\be _i =1 &\akkor J_i =[1/2,1] \cap [(1/2)+y,1+y] ,\\
  \al _{i+k} =\be _i =-1 &\akkor J_i =[0,1/2] \cap [y,(1/2)+y]  ,\\
  \al _{i+k} =-1, \ \be _i =1 &\akkor J_i =[0,1/2] \cap [(1/2)+y,1+y] ,\\
  \al _{i+k} =1, \ \be _i =-1 &\akkor J_i =[1/2,1] \cap [y,y+(1/2)] .
\end{align*}
Let $V_1 \stb V_4$ be as in \eqref{e4b}, and put $|V_j |=v_j$ $(j=1\stb 4)$.
If $1/2\le y\le 1$, then we obtain
$$  \sum_{i=1}^{n-k} |J_i | \le  v_4 \cd (1-y) < \left( \tfrac{n-k}{4} +\ep n
\right) \cd (1-y)< \tfrac{n-k}{4} \cd (1-y) + \ep  n.$$
If $0\le y\le 1/2$, then
$$\sum_{i=1}^{n-k} |J_i | \le  v_1 \cd ((1/2)-y)+ v_2 \cd ((1/2)-y)+
v_4 \cd y < \tfrac{n-k}{4} \cd (1-y) + \ep n,$$
and the proof is complete. \hfill $\square$

{\bf Proof of Theorem \ref{t5}.} First we construct a Borel measurable
function $g\colon \nl \to \nl$ such that $\graph g$  does not have
property ${\rm (K^t)}$. The graph of the function $g$ will be given -- apart
from a countable set -- as the intersection of a nested sequence of sets
$A_0 \sp A_1 \sp \ldots$ that are finite unions of rectangles. 

We put $n_0 =N_0 =1$ and $A_0 =R^0_1 =\nl ^2$. Suppose that $k\ge 1$, and
the integers $n_0 \stb n_{k-1}$, $N_{k-1} =n_0 \cdots n_{k-1}$, the rectangles
$R^{k-1}_j =\left[ \tfrac{j-1}{N_{k-1}},\tfrac{j}{N_{k-1}} \right] \times J^{k-1}_j$
$(j=1\stb N_{k-1})$ and the set $A_{k-1}= \bigcup_{j=1}^{N_{k-1}} R^{k-1}_j$ have been
defined, where $J^{k-1}_1 \stb J^{k-1}_{N_{k-1}}$ are closed subintervals of $\nl$
of length $1/2^{k-1}$.

Put $\ep _k = 2^{-2k-3}/N_{k-1}$. By Lemma \ref{l6}, we can choose an integer
$N_k > N_{k-1} /\ep _k$ and a sequence $\eta ^k \in \{ -1 ,1\}^{N_k}$ such
that $N_{k-1} \mid N_k$, and $\eta ^k =(\eta ^k_1 \stb \eta ^k_{N_k} )$
has the following property:

For every $k=1\stb N_k -1$ and
$0\le u<v\le n-k$ the cardinality of each of the sets
\begin{equation}\label{e6}
\begin{split}  
 &\{ u+1\le i \le v \colon \eta^k _i =\eta^k _{i+k} =1\} ,\\
 &\{ u+1\le i \le v \colon \eta^k _i =\eta^k _{i+k} =-1\},\\
 &\{ u+1\le i \le v \colon \eta^k _i =1, \ \eta^k_{i+k} =-1\},\\
 &\{ u+1\le i \le v \colon \eta^k_i =-1, \ \eta^k _{i+k} =1\}
\end{split}
\end{equation}
is less than $((v-u)/4) +\ep_k N_k$.

Putting $n_k =N_k /N_{k-1}$ we get $N_k =n_0 \cd n_1 \cdots n_k$ and $n_k >
1/\ep _k$. We define 
$$\al ^k_j = (\eta ^k_{(j-1)\cd n_k +1} \stb \eta ^k_{j\cd n_k}) \qquad
(j=1\stb N_{k-1} )$$
and
$$A_k= \bigcup_{j=1}^{N_{k-1}} \left( R^{k-1} _j \right) ^{\al ^k_j} .$$
Clearly, $A_k$ is the union of the rectangles $R^k_j =\left[
\tfrac{j-1}{N_k},\tfrac{j}{N_k} \right] \times J^k_j$
$(j=1\stb N_k )$, where $J^k_1 \stb J^k_{N_k}$ are closed subintervals
of $\nl$ of length $1/2^k$.

In this way we defined the sequences of integers $n_k$, $N_k$ and the 
sequence of sets $A_k$ such that $n_0 =N_0 =1$, $n_k \ge 4$ and $N_k =
n_1\cdots n_k$ if $k=1,2,\ldots$, and $\nl ^2 =A_0 \sp A_1 \sp \ldots$. It
is clear that $|A_k |=1/2^k$ for every $k$.

Let $A=\bigcap_{k=0}^\infty A_k$. Then $A$ is compact. It is easy to check
that $\{ y\colon (x,y)\in A\}$ is a singleton for every irrational $x$,
and has at most two elements if $x$ is rational. Thus there is a Borel
measurable function $g\colon \nl \to \nl$ such that $A \se \graph g$ is
countable. We prove that $A$ does not have property ${\rm (K^t)}$. 
\begin{lemma} \label{l8}
We have 
\begin{equation*} 
\left| A_k \cap \left( A_k + (x,y) \right) \right| < 2^{j-2k}
\end{equation*}   
for every $k\ge 0$, $j=0\stb k$, $x\ge 1/N_j$ and $y\in \rr$.
\end{lemma}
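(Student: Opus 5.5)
We argue by induction on $k$, proving the slightly stronger bound
$$\left| A_k \cap ( A_k + (x,y) ) \right| \le 2^{j-2k}\bigl(1-2^{-k-1}\bigr) \qquad (j=0\stb k,\ x\ge 1/N_j ,\ y\in\rr ),$$
which gives the strict inequality of the statement. For $k=0$ we have $j=0$, so $x\ge 1$, and the intersection has measure $0$.

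For the inductive step we fix $k\ge 1$ and the vector $v=(x,y)$. We decompose
$$A_k\cap (A_k+v)=\bigcup_{a,b} \Bigl( (R^{k-1}_a)^{\al ^k_a}\cap \bigl((R^{k-1}_b)^{\al ^k_b}+v\bigr)\Bigr).$$
All the rectangles $R^{k-1}_a$ are translates of one another: each has width $1/N_{k-1}$ and height $2^{-k+1}$. Hence $R^{k-1}_b+v=R^{k-1}_a+v_{ab}$ for a suitable vector $v_{ab}$, whose first coordinate is $x+(b-a)/N_{k-1}$. If this coordinate is negative, we swap the roles of $a$ and $b$ and replace $v_{ab}$ by $-v_{ab}$; this does not change the measure. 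Then Lemma \ref{l7} applies with $n=n_k$ and the pair of sign vectors $\al ^k_a,\al ^k_b$.

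Its hypothesis has to be checked. Take a horizontal offset of $s$ columns, measured in units of $1/N_k$. The sets $V_1\stb V_4$ of \eqref{e4b} are then sets of the form \eqref{e6} for the sequence $\eta ^k$, with lag $(a-b)n_k+s$ and a contiguous index window of length $n_k-s$. By the choice of $\eta ^k$, their cardinalities are less than $(n_k-s)/4+\ep _k N_k=(n_k-s)/4+\ep ' n_k$, where $\ep '=\ep _kN_{k-1}=2^{-2k-3}$.

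The one thing to rule out is lag $0$, that is $a=b$ and $s=0$. There \eqref{e6} gives nothing, since for $\al=\be$ the set $V_1$ has about $n/2$ elements. Lag $0$ is excluded by the assumption on $x$:
\begin{enumerate}[{\rm (a)}]
\item if $j\le k-1$, then $x\ge 1/N_{k-1}$, so $a\ne b$ whenever the two rectangles overlap;
\item if $j=k$, then $x\ge 1/N_k$, so $s\ge 1$ when $a=b$.
\end{enumerate}
The linear interpolation inside Lemma \ref{l7} takes care of values of $x$ lying between grid points.

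Summing \eqref{e5} over the pairs gives
$$|A_k\cap(A_k+v)|\le \tfrac14\,|A_{k-1}\cap(A_{k-1}+v)| + \ep '\cdot \#\{\text{pairs}\}\cdot |R^{k-1}_1|.$$
For each $a$ at most two indices $b$ give a nonempty intersection, so there are at most $2N_{k-1}$ pairs. Since $|R^{k-1}_1|=2^{-k+1}/N_{k-1}$, the error term is at most $2^{-2k-3}\cdot 2\cdot 2^{-k+1}=2^{-3k-1}$.

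We now bound the main term in the two cases.
\begin{enumerate}[{\rm (a)}]
\item If $j\le k-1$, the induction hypothesis bounds the main term by $\tfrac14 \cdot 2^{j-2k+2}(1-2^{-k})=2^{j-2k}(1-2^{-k})$. Adding the error $2^{-3k-1}\le 2^{j-3k-1}$ gives at most $2^{j-2k}(1-2^{-k-1})$, as required.
\item If $j=k$, we use only $|A_{k-1}\cap (A_{k-1}+v)|\le |A_{k-1}|=2^{-k+1}$. The total is then at most $2^{-k-1}+2^{-3k-1}$, which is at most $2^{-k}(1-2^{-k-1})$ for $k\ge1$.
\end{enumerate}

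The main obstacle is the bookkeeping in the reduction to Lemma \ref{l7}. First, the sets \eqref{e4b} for the blocks $\al ^k_a,\al ^k_b$ must coincide with windows of \eqref{e6} for a single nonzero lag of $\eta ^k$. Second, the normalisation $\ep _kN_k=\ep ' n_k$ must come out right. Third, the number of interacting pairs has to be counted so that the accumulated error is dominated by the slack $2^{j-3k-1}$ in the strengthened induction hypothesis.
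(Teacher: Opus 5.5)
Your proof is correct and follows the paper's argument: induction on a strengthened bound, decomposition of $A_k\cap(A_k+v)$ into pairs of level-$(k-1)$ rectangles, Lemma \ref{l7} with $\ep=\ep_kN_{k-1}=2^{-2k-3}$, at most two interacting pairs per column, and the split between $j<k$ (induction hypothesis) and $j=k$ (trivial bound $|A_{k-1}|$). The one real deviation is the diagonal pair $a=b$. There Lemma \ref{l7} cannot be quoted verbatim, because its hypothesis fails at offset $0$. You rightly use instead that its interpolation proof only needs the two grid offsets bracketing $x$, and both are $\ge 1$ since $x\ge 1/N_k$. The paper keeps Lemma \ref{l7} as a black box by comparing with the shifted block $\be=(1,\al_1\stb \al_{n_k-1})$ at $x-1/N_k$, at the price of doubling $\ep$.
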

\proof We prove the slightly stronger inequality
\begin{equation} \label{e8}
  \left| A_k \cap \left( A_k + (x,y) \right) \right| \le 2^{j-2k} -2^{-3k}
  \tag{10$_k$}
\end{equation}   
under the same conditions. We prove by induction on $k$.
If $k=0$, then $x\ge 1/N_0 =1$ implies that both sides of (10$_0$) is zero,
and thus (10$_0$) is true. Let $k\ge 1$, and suppose that
(10)$_{k-1}$ holds for every $j,x,y$ satisfying
the conditions stated in the lemma. Since $A_{k-1}=\bigcup_{j=1}^{N_{k-1}} R_j^{k-1}$,
we have
$$A_{k-1} \cap \left( A_{k-1} + (x,y) \right) =\bigcup_{j_1 , j_2 =1}^{N_{k-1}} \left( R_{j_1}^{k-1}
\cap (R_{j_2}^{k-1} +(x,y)) \right)$$
and
\begin{equation} \label{e8b}
\begin{split}
\big| (R_{j_1}^{k-1} \cap A_k ) \cap \big( (R_{j_2}^{k-1} \cap A_k ) &+
(x,y) \big) \big| =\\
&\left| \left( R_{j_1}^{k-1} \right) ^{\al ^k_{j_1}} \cap \left( \left(
R_{j_2}^{k-1}   \right) ^{\al ^k_{j_2}}  + (x,y) \right) \right|
\end{split}
 \tag{11}
\end{equation}
for every $j_1$ and $j_2$. Suppose $x\ge 1/N_k$. We prove
\begin{equation} \label{e8c}
\begin{split}
\bigg| \left( R_{j_1}^{k-1} \right) ^{\al ^k_{j_1}} \cap & \left( \left(
R_{j_2}^{k-1}   \right) ^{\al^k_{j_2}} + (x,y) \right) \bigg| <\\
&\tfrac{1}{4} \cd \left| R_{j_1}^{k-1} \cap (R_{j_2}^{k-1} +
(x,y) )\right| +2^{-2k-2} \cd |R^{k-1}_{j_1} |
\end{split}
\tag{12}
\end{equation}
for every $1\le j_1 , j_2 \le N_{k-1}$. If $j_2 >j_1$, then \eqref{e8c} is true,
because its left hand side is zero.

Suppose $j_2 < j_1$. Since $R_{j_1}^{k-1}$ is a translated copy of $R_{j_2}^{k-1}$,
$(R_{j_1}^{k-1} ) ^{\al^k_{j_1}}$ is a translated copy of $(R_{j_2}^{k-1} ) ^{\al^k_{j_2}}$,
and we may apply Lemma \ref{l7} with $R=R_{j_2}^{k-1}$, $n=n_k$, $\al =
\al^k_{j_2}$ and $\be = \al^k_{j_1}$. Since the cardinality of each of
the sets listed in \eqref{e6} is less than $((v-u)/4) +\ep_k N_k$, and
$\ep_k \cd N_k =(\ep _k\cd N_{k-1} ) \cd n_k$, it follows that the
conditions of Lemma \ref{l7} are satisfied with $\ep =\ep _k \cd N_{k-1}$.
By the choice of $\ep _k$, \eqref{e8c} is true.

If $j_1 =j_2$, then we can argue as follows. Suppose $j_1 =j_2 =j$, and
$\al =\al ^k_j =(\al _1 \stb \al _{n_k})$. Since $x\ge 1/N_k$, we have
$$ \left( R_j^{k-1}   \right) ^{\al} \cap \left( \left( R_j^{k-1}   \right) ^{\al}
+ (x,y) \right) \su \left( R_j^{k-1}   \right) ^{\al} \cap \left( \left( R_j^{k-1}
\right) ^\be  + (x',y) \right),$$
where $x'=x-(1/N_k)\ge 0$ and $\be =(1,\al _1 \stb \al_{n_k-1})$.
Put $\ep =\ep _k \cd N_{k-1}$. If $0\le m \le n_k$, then
\begin{align*}
| \{ 1\le i\le n-m \colon & \al _{i+m} =\be _i =1\}|  \le \\
&1 +((n-m)/4) +\ep n_k < ((n-m)/4) +2\ep n_k ,
\end{align*}
as $n_k>1/\ep _k\ge 1/\ep$. We get the same estimate for the cardinality
of the sets $V_2 , V_3 , V_4$ of Lemma \ref{l7}. Applying the lemma, we obtain
\eqref{e8c}  in this case as well.

Now we add \eqref{e8c} for every $j_1$ and $j_2$. Since the (possibly
degenerate) rectangles $R_{j_1}^{k-1} \cap (R_{j_2}^{k-1} +(x,y) )$
are pairwise nonoverlapping and $\sum_{j=1}^{N_{k-1}} |R^{k-1}_j |=|A_{k-1} |$,
we obtain
\begin{equation} \label{e9}
\left| A_k \cap \left( A_k + (x,y) \right)   \right| <
\tfrac{1}{4} \cd \left| A_{k-1} \cap \left( A_{k-1} + (x,y) \right)   \right|
+2^{-2k-1} \cd |A_{k-1} |. \tag{13}
\end{equation}
(Note that for every $j_1$ there are at most two $j_2$ such that
$|R_{j_1}^{k-1} \cap (R_{j_2}^{k-1} +(x,y) )|>0$.) By $|A_{k-1} |=2^{-k+1}$, the
right hand side of \eqref{e9} is at most $2^{-k-1} +2^{-2k-1} \cd 2^{-k+1} \le
2^{-k} -2^{-3k}$. Thus (10)$_k$ is proved if $x\ge 1/N_k$.

If $x\ge 1/N_j$, where $j<k$, then (10)$_{k-1}$ holds by
the induction hypothesis, and thus the left hand side of \eqref{e9} is less than
$$\frac{1}{4} \cd \left( 2^{j-2k+2} -2^{-3k+3} \right) +
2^{-2k-1} \cd 2^{-k+1} \le 2^{j-2k} -2^{-3k} .$$
 \hfill $\square$

\bigskip
Now we prove that $A$ does not have property ${\rm (K^t)}$. Let $h\colon \nl
\to \rr$ be a simple function, and let $B=A+\graph h$.
We prove that $|B|\ge 1/9$. Let $H$ be the closure of $\graph h$, then
$H\se \graph h$ is finite. Since $|A|=0$, it is enough to show that
$|A+H|\ge 1/9$. Since $A$ and $H$ are compact, we have
$$A+H=\bigcap_{k=0}^\infty (A_k +H),$$
and thus it is enough to prove that $|A_k +H|\ge 1/9$ for every $k$.

Let $k$ be fixed, and put $C_i =A_k +\{ (i/N_k , h(i/N_k ))\}$ and $f_i =
\chi _{C_i}$ for every $i=1\stb N_k$. Then $C_i \su A_k +H$ for every $i$, and
$$\int_{\sik} f_{i_1} f_{i_2} \, d\la _2 = |C_{i_1} \cap C_{i_2} | =
\left| A_k \cap (A_k +((i_2 -i_1 )/N_k ,y))\right| ,$$
where $y=h(i_2 /N_k )-h(i_1 /N_k )$. By Lemma \ref{l8}, we obtain
$\int_{\sik} f_{i_1} f_{i_2} \, d\la _2 < 2^{j-2k}$ whenever $i_2 -i_1 \ge N_k /N_j$.
Let $I_j$ denote the set of pairs $(j_1 ,j_2 )$ such that $1\le i_1 <i_2
\le N_k$ and $N_k /N_j \le i_2 -i_1 < N_k /N_{j-1}$. Then we have, for every
$j=1\stb k$,
\begin{align*}
\int_{\sik} \sum_{(i_1 ,i_2 )\in I_j} f_{i_1} f_{i_2} \, d\la _2 &<|I_j |\cd 2^{j-2k}
<N_k \cd \frac{N_k}{N_{j-1}} \cd 2^{j-2k}\\
& \le \frac{N_k^2}{4^{j-1}}\cd 2^{j-2k}=4N_k^2 \cd 2^{-j-2k}
\end{align*}
by $N_j \ge 4^j$. Therefore, we obtain
\begin{equation*}\label{10}
\begin{split}
\int_{\sik} \left( \sum_{i=1}^{N_k} f_i \right) ^2 \, d\la _2 =&
\int_{\sik} \sum_{i=1}^{N_k} f^2_i \, d\la _2  +2\cd \int_{\sik}
\sum_{1\le i_1< i_2 \le N_k} f_{i_1} f_{i_2}\, d\la _2  <\\
&N_k \cd 2^{-k} + 2\cd \sum_{j=1}^k 4N_k^2 \cd 2^{-j-2k} <9N_k^2 \cd 2^{-2k}.
\end{split}
\end{equation*}
Put $C=\bigcup_{i=1}^{N_k} C_i$ and $f=\chi _C$. Then $C\su A_k +H$. We have
\begin{equation*}\label{e11}
\begin{split}
N_k \cd 2^{-k} =&\int_{\sik} \sum_{i=1}^{N_k} f_i \, d\la _2 =
\int_{\sik} \left( \sum_{i=1}^{N_k} f_i \right) \cd f\, d\la _2 \le \\
&\left( \int_{\sik} \left( \sum_{i=1}^{N_k} f_i \right) ^2 \,
d\la _2 \right)^{1/2} \cd \left( \int_{\sik} f^2 \, d\la _2 \right)^{1/2} <\\
& 3N_k \cd 2^{-k} \cd \sqrt{|C|} ,
\end{split}
\end{equation*}
and thus $|C| >1/9$. Then $|A_k +H|\ge 1/9$ for every $k$,
and $|A +H|\ge 1/9$. Since $A\se (\graph g)$ is countable,
$H\se (\graph h)$ is finite and $|H|=0$, we obtain $|(\graph g) +
(\graph h )|\ge 1/9$. This proves that $\graph g$ does not have property
${\rm (K^t)}$.

Finally, we show that a slight modification of the construction of the sets
$A_k$ gives a continuous function $f$ not having property ${\rm (K^t)}$.
We fix a sequence of numbers $q_k$ such that $q_k >1$ for every $k=0,1,\ldots$,
and $\prod_{k=0}^\infty q_k <2$.

We have $A_k= \bigcup_{j=1}^{N_k} R_j^k$, where the rectangles $R_j^k$
$(j=1\stb N_k )$ are nonoverlapping. Let ${\tilde R}_j^k$ be a rectangle
homothetic to $R_j^k$, covered by the interior of $R_j^k$, and such that 
$|{\tilde R}_j^k |> |R_j^k | /q_k$. We put
${\tilde A}_k = \bigcup_{j=1}^{N_k} {\tilde R}_j^k$ for every $k$, and  
$\tilde A =\bigcap_k^\infty {\tilde A}_k$. Then we have $|{\tilde A}_k |
>2^{-k-1}$ for every $k$.

The section $\{ y\colon (x,y)\in \tilde A \}$ is either empty or a singleton for
every $x$, and thus $\tilde A=\graph f_0$, where $f_0$ is a continuous function
defined on the projection $D$ of $\tilde A$ onto $\nl$.

Let $h$ and $H$ be as above, let $k$ be fixed, and put ${\tilde C}_i =
{\tilde A}_k + \{ (i/N_k , h(i/N_k ))\}$, $\tilde C =\bigcup_{i=1}^{N_k}
{\tilde C}_i$ and ${\tilde f}_i = \chi _{{\tilde C}_i}$ for every $i=1\stb N_k$.
Since $|{\tilde A}_k |=2^{-k-1}$ for every $k$, the computation
of \eqref{e11} gives
$$N_k \cd 2^{-k-1}<3N_k \cd 2^{-k} \cd \sqrt{|{\tilde C}|} ,$$ 
and thus $|{\tilde C}|>1/36$. This proves that $\graph f_0$ does not have
property ${\rm (K^t)}$. Since $D$ is compact, $f_0$ can be extended to $\nl$ as
a continuous function $f$. It is clear that $\graph f$ does not have
property ${\rm (K^t)}$ either. \hfill $\square$

\subsection*{Acknowledgments}


The first author was supported by the Hungarian National Foundation for
Scientific Research, Grant No. K146922.

\begin{small}\noindent (M. Laczkovich) \\
{\sc Department of Analysis, E\" otv\" os Lor\' and University,\\  
Budapest, P\' azm\' any 
P\' eter s\' et\' any 1/C, 1117 Hungary. \\
email: {\tt miklos.laczkovich@gmail.com} 
 
\bigskip\noindent (A. M\'ath\'e)\\
Mathematics Institute, University of Warwick\\
email: {\tt a.mathe@warwick.ac.uk}
}
\end{small}

\end{document}